\documentclass[reqno,12pt]{article}
\usepackage{amsmath,amsthm,amssymb,bm} 
\usepackage{datetime}

\newtheorem{theorem}{Theorem}
\newtheorem{lemma}[theorem]{Lemma}
\newtheorem{proposition}[theorem]{Proposition}
\newtheorem{corollary}[theorem]{Corollary}

\def\E{{\mathbb E}}

\def\P{{\mathbb P}}
\def\R{{\mathbb R}}

\def\disc{{\mathop{\rm disc}}}
\def\tr{{\mathop{\rm tr}}}

\begin{document}
\title{Intersections of hypergraphs}
\author{
B\'ela Bollob\'as
\footnote{Department of Pure Mathematics and Mathematical Statistics, 
Wilberforce Road, Cambridge CB3 0WB, UK {\em and}  Department of
Mathematical Sciences, University of Memphis, Memphis TN38152,
USA; email: bollobas@msci.memphis.edu.  Research supported in part
by 
University of Memphis Foundation grant
UMF 20953,
NSF grant DMS-0906634, 
and
DARPA grant 9060-200241 CLIN 01.} 
\and
Alex Scott
\footnote{Mathematical Institute, 24-29 St Giles', Oxford, OX1 3LB, UK;
email: scott@maths.ox.ac.uk.}
}
\date{}
\maketitle

\begin{abstract}
Given two weighted $k$-uniform hypergraphs $G$, $H$ of order $n$, how 
much (or little) can we make them overlap by placing them on the same vertex set?  
If we place them at random, how concentrated is the distribution of the intersection?
The aim of this paper is to investigate these questions.
\end{abstract}

\section{Introduction}\label{intro}

The discrepancy of a set of points in a subset of Euclidean space measures how uniformly the points are spread through the set.  
For instance,  the discrepancy of a set of $n$ points in a square of area $n$ can be defined as the maximum difference
between the area of a subsquare and the number of points from the set that it contains.
Discrepancy theory in the geometric setting has been studied for almost a century, since the work of Weyl \cite{W16} on sequences, and is of interest in 
areas including number theory and combinatorics, as well as having applications in computational geometry and numerical integration
(see for instance the books by Beck and Chen \cite{BC87},
Kuipers and Niederreiter \cite{KN74} 
and Drmota and Tichy \cite{DT97}).

In the discrete context, a similar notion of discrepancy for hypergraphs was introduced forty years ago by Erd\H os and Spencer \cite{ES71}, 
and measures the extent to which the edges of a hypergraph are uniformly distributed (inside the complete graph).
Erd\H os and Spencer showed that the edges of a $k$-uniform hypergraph can not be distributed too uniformly:
for every $k$-uniform hypergraph on $n$ vertices, there is a subset $S$ in which the number of edges 
differs from $\frac 12\binom {|S|}{k}$  by at least
$c_kn^{(k+1)/2}$. (This bound is optimal up to a constant factor.)
In the case of graphs (i.e.~$k=2$), Erd\H os, Goldberg, Pach and Spencer \cite{EGPS88} 
later extended this to graphs of any density $p$, where the measure of discrepancy is the maximum difference between the number of edges in a subset $S$ and the expected $p\binom{|S|}{2}$.
(There are a number of other standard ways to measure discrepancy for discrete structures:
see Beck and S\'os \cite{BS95},
Chazelle \cite{C00}
and Matou\v sek \cite{M99}.) 

The aim of this paper is to study the discrepancy of {\em pairs} of hypergraphs.
The discrepancy of a pair hypergraphs, introduced in \cite{BS11},
measures the extent to which the edges of the two hypergraphs are uniformly and independently distributed.  
Given $k$-uniform hypergraphs $G$ and $H$ with $n$ vertices and densities $p$, $Q$, the discrepancy of the pair $G$, $H$ is the maximum size, 
over all bijections between their vertex sets,
of the difference between their intersection and $pq\binom nk$ (the expected intersection under a random mapping).  
For instance, $G$ and $H$ have discrepancy 0 if their intersection has the same size for any placement of both hypergraphs onto the same vertex set; on the other hand, if $G$ and $H$ are isomorphic to the same incomplete graph then their discrepancy will be large, as any isomorphism between them will give a much larger than average intersection.

In light of the results of Erd\H os and Spencer \cite{ES71}, it is natural to expect that every pair of (unweighted) $k$-uniform hypergraphs of moderate density should have large discrepancy (of order $n^{(k+1)/2}$), and we conjectured in \cite{BS11} that this should be the case.  
For $k=2$, this conjecture was proved in \cite{BS11}, but for $k=3$ it turns out that there is a counterexample (see section \ref{two}); 
for $k\ge4$, the conjecture is still open.
In this paper, we investigate the discrepancy of pairs of {\em weighted} hypergraphs. 
It turns out that, for weighted hypergraphs the picture is dramatically different from the unweighted case:
\begin{itemize}
\item  For every $k\ge1$, there is a set of $k$ nontrivial weighted $k$-uniform hypergraphs such that every pair has discrepancy 0. 
\end{itemize}
On the other hand, if we take one additional hypergraph, there must be a pair with large discrepancy:
\begin{itemize}
\item  For every $k\ge1$, and every set of $k+1$ nontrivial normalised weighted hypergraphs, there is some pair that
has discrepancy at least $c_kn^{(k+1)/2}$.
\end{itemize}

As we shall see in Section \ref{two}, both results are special cases of much more general results (Theorem \ref{orthogonal} and Theorem \ref{thdisc}, respectively) on
the discrepancy of pairs of hypergraphs.  
We will also be interested in the size of the intersection when two (weighted) $k$-uniform hypergraphs are placed at random onto the same vertex set.  
For sequences ($k=1$) and graphs ($k=2$), the distribution of this intersection has been extensively studied in the statistical literature, and central limit theorems have been proved under various conditions.
Here, we work with general $k$, but prove only a lower bound (Theorem \ref{thexp}) on the concentration of the distribution.
 
The rest of the paper is organized as follows: after 
giving some background in Section \ref{one}, 
we discuss the discrepancy of pairs of weighted hypergraphs and
present our results in Section \ref{two}. 
We give notation and some useful tools in Section \ref{tools}, 
and define the $W$-vector in Section \ref{wvector}.
We study the effects of a single transposition in Section \ref{trans}; we prove
Theorems \ref{thdisc} and \ref{thexp} in Section \ref{proofs1}; 
and Theorem \ref{orthogonal} is proved in Section \ref{proofs2}.  
We conclude in Section \ref{conc} with some comments and open problems.

We work throughout the paper with weighted hypergraphs.  
A {\em weighted $k$-uniform hypergraph with vertex set $V$} is a function $w:V^{(k)}\to\R$, i.e.~a weighting on the
$k$-sets in $V$.
An {\em unweighted $k$-uniform hypergraph} is a subset of $V^{(k)}$, and can be identified with the weighted
hypergraph given by the indicator function for its edges.  
The {\em density} of $w$ is $d(w)=w(V)/\binom nk$,
where $w(V)=\sum_{e\in V^{(k)}}w(e)$.

\subsection{Discrepancy of a single hypergraph}\label{one}
In this section we give some background on the 
discrepancy of a single hypergraph. 

If $S\subset V$ is chosen uniformly at random from all sets of some fixed size, we have 
\begin{equation}\label{expect0}
\E w(S)=d(G)\binom{|S|}{k}.
\end{equation}
It therefore makes sense to define 
the {\em discrepancy} of $G$ by 
\begin{equation}\label{discdef}
\disc(G)=\max_{S\subset V}\left|w(S)-d(G)\binom {|S|}k\right|.
\end{equation}
The discrepancy measure how far $w(S)$ can deviate from \eqref{expect0}, but does not indicate whether the number of edges
is greater or less than we expect.  We therefore define
the {\em positive discrepancy} $\disc^+(G)$ by
$$\disc^+(G)=\max_{S\subset V} \left(w(S)-d(G)\binom{|S|}{k}\right).$$
and the {\em negative discrepancy} $\disc^-(G)$ by
$$\disc^-(G)=\max_{S\subset V} \left(d(G)\binom{|S|}{k}-w(S)\right).$$
Clearly $\disc(G)=\max\{\disc^+(G),\disc^-(G)\}$, and
it follows from \eqref{expect0} that both positive and negative discrepancy are nonnegative.  
(We note that notions of signed discrepancy have been considered in other contexts: see 
Erd\H os, Faudree, Rousseau and Schelp \cite{EFRS94}, 
Krivelevich \cite{K95}
and Keevash and Sudakov \cite{KS03}.)

The discrepancy of graphs and hypergraphs was introduced by Erd\H os and Spencer \cite{ES71}, who showed 
that every $k$-uniform hypergraph $G$ of order $n$ and density $1/2$ has 
\begin{equation}\label{esresult}
\disc(G)\ge c_kn^{(k+1)/2}.
\end{equation}
For $k=2$ (i.e.~ for graphs), Erd\H os, Goldberg, Pach and Spencer \cite{EGPS88} extended \eqref{esresult} to 
arbitrary density, showing that if $G$ has order $n$ and density $p$, where $p\in (2/(n-1),1-2/(n-1))$, then
\begin{equation}\label{egps}
\disc(G)\ge c\sqrt{p(1-p)}n^{3/2}.
\end{equation}

By considering random graphs in $\mathcal G(n, 1/2)$.
it can be seen that the discrepancy of a graph on $n$ vertices
can be as small as $O(n^{3/2})$;
thus \eqref{esresult} is optimal up to the constant. However, the one-sided
discrepancies can be smaller: 
$K_{n/2,n/2}$ has positive discrepancy $O(n)$,
while its complement $2K_{n/2}$ has negative discrepancy $O(n)$;
on the other hand, both graphs have discrepancy $\Omega(n^2)$ in the other direction. 
Bollob\'as and Scott \cite{BS06} showed that this tradeoff is unavoidable:
for every graph $G$ of order $n$, with $p\binom n2$ edges, where $p(1-p)\ge1/n$, we have
\begin{equation}\label{prod}
 \disc^+(G)\disc^-(G)\ge cp(1-p)n^3.
\end{equation}
Note that \eqref{egps} follows immediately.
A similar result to \eqref{prod} holds for $k$-uniform hypergraphs \cite{BS06}: for every hypergraph $H$ of order $n$ and
density $p$, where $p(1-p)\ge1/n$, 
\begin{equation}\label{bsresult}
\disc^+(H)\disc^-(H)\ge c_kp(1-p)n^{k+1}.
\end{equation}

\subsection{Results}\label{two}

We now turn to the discrepancy of pairs of hypergraphs.
Given two weighted hypergraphs $w, u$ on $V$, 
the {\em intersection} of $w$ and $u$ is naturally defined as $\langle w,u\rangle$, where 
$\langle\cdot,\cdot\rangle$ is the standard inner product on $V^{(k)}$. 
There is also a natural action of 
the symmetric group $S(V)$ on the space of weighted hypergraphs, given by $w_\pi(e)=w(\pi^{-1}e)$
(see Section \ref{tools} for notation).

If we permute $w$ uniformly at random, the expected intersection with $u$ is 
\begin{equation}\label{d0}
\E_\pi\langle w_\pi,u\rangle=d(w)d(u)\binom nk.
\end{equation}
This leads us to define the {\em positive discrepancy of the pair $w$, $u$} by
\begin{equation}\label{wdplus}
\disc^+(w,u)=\max_\pi \langle w_\pi,u\rangle-d(w)d(u)\binom nk
\end{equation}
and the {\em negative discrepancy} by
\begin{equation}\label{wdminus}
\disc^-(w,u)
=d(w)d(u)\binom nk-\min_\pi \langle w_\pi,u\rangle.
\end{equation}
Note that both are nonnegative, by \eqref{d0}.
The {\em discrepancy} $\disc(w,u)$ is then defined as
$$\disc(w,u)=\max\{\disc^+(w,u),\disc^-(w,u)\}
=\max_\pi|\langle w_\pi,u\rangle-d(w)d(u)\binom nk|.$$  

The discrepancy of a pair of hypergraphs
was introduced in \cite{BS11}, and is a natural extension of the notion of discrepancy for a single hypergraph.
Analogously with \eqref{egps}, it was shown in \cite{BS11} that, for every
pair of graphs $G$, $H$, of order $n$ and densities  
$p,q \in (16/n,1- 16/n)$, 
\begin{equation}\label{noprod}
\disc(G,H)\ge c(p,q)n^{3/2},
\end{equation}
where $c(p,q)=p^2 (1 - p)^2 q^2 (1 - q)^2/10^{10}$.

As with the discrepancy of a single graph, the one-sided discrepancies of pairs of graphs can be quite small.  
For instance, consider $G=K_{n/2,n/2}$ and $H=2K_{n/2}$:
this pair has positive discrepancy
$O(n)$, which is minimal up to a constant factor for dense graphs  
(although the negative discrepancy is $\Omega(n^2)$,
which is maximal up to a constant factor).
However, it was shown in \cite{BS11} that
there is a bound on the {\em product} of the two discrepancies:
for every
pair of graphs $G$, $H$, of order $n$ and densities  
$p,q \in (16/n,1- 16/n)$, 
\begin{equation}\label{prod2}
\disc^+(G, H)\disc^-(G, H) \ge c(p,q)^2 n^3.
\end{equation}
Thus if the discrepancy on one side is small, the discrepancy on the other must be large.
The bound \eqref{prod2} is sharp up to the constant, as can be seen
from $K_{n/2,/n/2}$ and $2K_{n/2}$ or by taking $G=2K_{n/2}$ and letting
$H$ be a random graph with fixed density.  
Note also that \eqref{prod} is a special case of \eqref{prod2}, as we can take $H=K_{n/2}\cup(n/2)K_1$
(which corresponds to restricting $S$ to have size $n/2$ in \eqref{discdef}).  
Equation \eqref{noprod} also follows as an immediate corollary.

It seems natural to expect that bounds similar to \eqref{noprod} and \eqref{prod2} 
should hold for $k$-uniform hypergraphs: by analogy with the situation for a single hypergraph
(see \eqref{esresult} and \eqref{bsresult} above), 
we should expect a lower bound of form $cn^{k+1}$ on the product of positive and negative discrepancies, 
which would in turn yield a bound of form $cn^{(k+1)/2}$ on the (unsigned) discrepancy.  
Such a bound was conjectured in \cite{BS11}, but we were surprised to find the following 
simple counterexample for 3-uniform hypergraphs.  Let $V$ be a set of $n$ vertices, and let $V=A\cup B$ be a partition.  We let $G$ be the 3-uniform hypergraph on $V$ with all triples that meet both $A$ and $B$, and $H$ be a Steiner triple system.  Then $\disc(G,H)=0$.  
(This is easily shown: in any placement of $H$, there must be exactly $|A| |B|/2$ edges of $H$ that meet both $A$ and $B$, as each such edge contains exactly two edges from $\{ab:a\in A,b\in B\}$.)  But now we can obtain an example in which both hypergraphs have density bounded away from 0 and 1 by taking $H$ to be the union of a suitable number of edge-disjoint disjoint Steiner triple systems (see
Doyen \cite{D72} or Teirlinck \cite{T73} for constructions).

For {\em weighted} hypergraphs, the situation is even more dramatic: 
there is a nontrivial set of $k$ weighted $k$-uniform hypergraphs for which every pair has discrepancy 0.
Note that if $w$ is a constant function, then trivially $\disc(w,u)=0$ for every $u$.  Indeed, if we add a constant function to 
$w$ it does not affect the discrepancy (that is, $\disc(w+\lambda\mathbf1,u)=\disc(w,u)$).  So, to avoid triviality,
we will restrict ourselves 
to hypergraphs $w$ such that $w(V)=0$.  We then have the following result.

\begin{theorem}\label{orthogset}
Let $k\ge 2$.  For every $n\ge 2k$ there are weighted hypergraphs $w_1,\ldots,w_k$ with vertex set $[n]$ such that 
$w([n])=0$  and $||w_i||_1=\binom nk$ for every $i$ and, for $0\le i<j\le k$ we have
$$\disc (w_i,w_j)=0.$$ 
\end{theorem}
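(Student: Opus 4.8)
The plan is to work in the permutation module $M=\R^{[n]^{(k)}}$, on which $S_n$ acts by $(\pi w)(e)=w(\pi^{-1}e)$, and to translate the condition $\disc(w,u)=0$ into a statement about the irreducible components of $w$ and $u$. By \eqref{d0}, $\disc(w,u)=0$ holds precisely when $\pi\mapsto\langle w_\pi,u\rangle$ is constant on $S_n$. For $n\ge 2k$ the module decomposes orthogonally and multiplicity-freely as $M=\bigoplus_{i=0}^k S^{(n-i,i)}$, where $S^{(n-i,i)}$ is the Specht module and $S^{(n)}$ is the line of constant functions; I would take this classical fact about the representations of $S_n$ as the backbone. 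Writing $w=\sum_i w^{(i)}$ and $u=\sum_i u^{(i)}$ for the corresponding orthogonal projections, and using that each $\pi$ preserves every summand while distinct summands are orthogonal, all cross terms vanish and $\langle w_\pi,u\rangle=\sum_{i=0}^k\langle \pi w^{(i)},u^{(i)}\rangle$.

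The key lemma I would prove is: for $n\ge 2k$, $\disc(w,u)=0$ if and only if for each $i\in\{1,\dots,k\}$ at least one of $w^{(i)},u^{(i)}$ is zero. The $i=0$ term is constant since $S_n$ fixes $S^{(n)}$ pointwise, so only the terms $i\ge1$ matter. For a nontrivial component, averaging $\langle\pi w^{(i)},u^{(i)}\rangle$ over $\pi$ gives $\langle Pw^{(i)},u^{(i)}\rangle$, where $P=\frac1{n!}\sum_\pi\pi$ projects onto the $S_n$-invariants; since $S^{(n-i,i)}$ contains no trivial subrepresentation, $Pw^{(i)}=0$, so if the term is constant it is identically $0$. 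Conversely, if $w^{(i)}\ne0$ then the orbit $\{\pi w^{(i)}\}$ spans the irreducible $S^{(n-i,i)}$, so $\langle\pi w^{(i)},u^{(i)}\rangle\equiv0$ forces $u^{(i)}$ to be orthogonal to $S^{(n-i,i)}$, i.e.\ $u^{(i)}=0$. This equivalence is the heart of the matter, and the step using multiplicity-freeness (to identify isotypic with irreducible components) together with irreducibility (for the orbit-spanning argument) is where the real work lies.

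With the lemma in hand the construction is immediate. Since $n\ge 2k$, each $S^{(n-i,i)}$ with $1\le i\le k$ is nonzero, so I can choose a nonzero $w_i\in S^{(n-i,i)}$ and rescale it so that $\|w_i\|_1=\binom nk$ (possible as $w_i\ne0$). Because $w_i$ lies in a nontrivial component it is orthogonal to the constants, whence $w_i([n])=\langle w_i,\mathbf1\rangle=0$. For $1\le i<j\le k$ the only nonvanishing projection of $w_i$ is onto $S^{(n-i,i)}$ and of $w_j$ onto $S^{(n-j,j)}$; as $i\ne j$, for every $\ell\ge1$ at least one of $w_i^{(\ell)},w_j^{(\ell)}$ vanishes, so $\disc(w_i,w_j)=0$ by the lemma. (The hypothesis $n\ge 2k$ is used exactly to guarantee that all $k$ of these nontrivial components are nonzero.)

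To make the $w_i$ explicit, and to connect with the $W$-vector of Section \ref{wvector}, I would realise $S^{(n-i,i)}\subseteq M$ as the image of a pure degree-$i$ function under the equivariant ``up'' map $U_i:\R^{[n]^{(i)}}\to M$ given by $(U_ig)(S)=\sum_{T\in\binom{S}{i}}g(T)$, which is injective for $n\ge 2k$. Here the only genuine check is that a nonzero function $g$ on $i$-sets lying in the kernel of the down operator exists (again guaranteed by $n\ge 2k$) and that it survives $U_i$. The remaining verifications — orthogonality of the decomposition, injectivity of $U_i$, and the bookkeeping of the $\ell^1$ norms — are routine.
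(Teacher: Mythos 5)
Your construction is correct, and the object you build is in fact the same one the paper uses: the paper derives Theorem \ref{orthogset} from Theorem \ref{orthogonal}, whose subspaces $V_i=\langle(\phi_i)_\pi:\pi\in S(V)\rangle$ are exactly your Specht components $S^{(n-i,i)}$ of the permutation module $M^{(n-k,k)}$. The difference is entirely in the machinery. You import the classical multiplicity-free decomposition $\R^{V^{(k)}}=\bigoplus_{i=0}^k S^{(n-i,i)}$ and get pairwise orthogonality of the summands (hence $\langle (w_i)_\pi,w_j\rangle\equiv 0$, hence zero discrepancy) from invariance of the inner product plus non-isomorphism of the irreducibles; the paper instead writes down explicit generators $\phi_i$ via signed incidence with $i$ disjoint pairs and proves $\langle(\phi_i)_\pi,(\phi_j)_\rho\rangle=0$ for $i\ne j$ by a self-contained parity argument, pairing edges across an odd path in the multigraph formed by the two matchings $\{a_sb_s\}$ and $\{c_td_t\}$. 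Your route is shorter and immediately yields the stronger characterization (the ``only if'' half of your key lemma, which the paper obtains as Theorem \ref{orthogonal}(e) via Theorem \ref{thdisc}), but it rests on cited representation theory, including the orbit-spanning/irreducibility and linear-independence-of-matrix-coefficients facts; the paper's argument is elementary and produces concrete hypergraphs. Note that for Theorem \ref{orthogset} itself only the easy direction of your lemma is needed (one vanishing projection per index forces each term $\langle\pi w^{(i)},u^{(i)}\rangle$ to vanish identically), so the one slightly underargued step in your write-up --- passing from ``the sum over $i$ is constant'' to ``each summand is constant'' in the forward direction, which needs orthogonality of matrix coefficients of distinct irreducibles --- does not affect the validity of the construction.
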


Theorem \ref{orthogset} is a special case of a much stronger result below (Theorem \ref{orthogonal}), which gives a description of all pairs of weighted hypergraphs
with discrepancy 0, and allows us to characterize collections of weighted hypergraphs satisfying Theorem \ref{orthogset}.

If we have $k+1$ weighted hypergraphs, however, the picture is very different: we do get a version of \eqref{prod2} for at least one pair, and attain the bound conjectured in \cite{BS11}.

\begin{theorem}\label{thma0}
For every $k\ge1$ there are constants $c,c'>0$ such that the following holds.
Let $n\ge 2k$, and suppose that $w_1,\ldots,w_{k+1}$
are weighted $k$-uniform hypergraphs on $[n]$ such that $w_i([n])=0$  and $||w_i||_1=\binom nk$ for every $i$.
Then there are distinct $i$ and $j$ such that
$$\disc^+(w_i,w_j)\disc^-(w_i,w_j) \ge cn^{k+1}.$$
In particular, there are $i<j$ such that,
$$\disc(w_i,w_j) \ge c'n^{(k+1)/2}.$$
\end{theorem}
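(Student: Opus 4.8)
The plan is to reduce the product of discrepancies to a second moment, evaluate that second moment level-by-level using the action of $S_n$ on $[n]^{(k)}$, apply a pigeonhole argument to the resulting ``level profiles'' to locate a good pair, and finally recover the missing factor of $n$ by a single-transposition analysis; this last step is the real obstacle.

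First I would record the elementary inequality $\disc^+(w,u)\,\disc^-(w,u)\ge\mathrm{Var}_\pi\langle w_\pi,u\rangle$, valid for any mean-zero $w,u$. Writing $X=\langle w_\pi,u\rangle$ for uniform random $\pi$, we have $\E_\pi X=0$ by \eqref{d0}, so the pointwise inequality $(X+\disc^-(w,u))(\disc^+(w,u)-X)\ge0$ gives $\E_\pi X^2\le\disc^+(w,u)\,\disc^-(w,u)$. To evaluate the variance I would decompose each hypergraph into its $S_n$-isotypic components on $[n]^{(k)}$, writing $w=\sum_{\ell=0}^k w_\ell$ with $w_\ell$ in the irreducible $U_\ell$ indexed by $(n-\ell,\ell)$. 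The hypothesis $w_i([n])=0$ kills the constant ($\ell=0$) part, and Schur's lemma gives $\E_\pi\langle a_\pi,b\rangle\langle a'_\pi,b'\rangle=0$ across distinct levels while $\E_\pi\langle a_\pi,b\rangle^2=\|a\|_2^2\|b\|_2^2/\dim U_\ell$ within level $\ell$. Hence
\begin{equation*}
\mathrm{Var}_\pi\langle w_\pi,u\rangle=\sum_{\ell=1}^k\frac{\|w_\ell\|_2^2\,\|u_\ell\|_2^2}{\dim U_\ell}.
\end{equation*}
This is the analytic content behind the $W$-vector of Section \ref{wvector}; in particular it already shows that the variance (equivalently, the discrepancy, by Theorem \ref{orthogonal}) vanishes exactly when $w$ and $u$ have disjoint level-supports.

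Next I would run the pigeonhole. Set $g_i=\big(\|w_{i,\ell}\|_2^2/\sqrt{\dim U_\ell}\big)_{\ell=1}^k\in\R^k_{\ge0}$, so that $\mathrm{Var}_\pi\langle (w_i)_\pi,w_j\rangle=\langle g_i,g_j\rangle$. Parseval together with Cauchy--Schwarz gives $\sum_\ell\|w_{i,\ell}\|_2^2=\|w_i\|_2^2\ge\|w_i\|_1^2/\binom nk=\binom nk$, and since $\dim U_\ell\le\binom n\ell\le\binom nk$ for $1\le\ell\le k$ (using $n\ge2k$), each $\|g_i\|_1\ge\binom nk^{1/2}$. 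The key combinatorial fact is that any $k+1$ nonzero vectors in $\R^k_{\ge0}$ contain a pair with large inner product: normalising to $\hat g_i=g_i/\|g_i\|_1$ in the simplex, the $(k+1)\times(k+1)$ Gram matrix $G$ is singular, has diagonal $\|\hat g_i\|_2^2\ge1/k$, and a null vector $c$ of $G$ satisfies $\tfrac1k\sum_ic_i^2\le\big|\sum_{i\ne j}G_{ij}c_ic_j\big|\le\max_{i\ne j}G_{ij}\,(k+1)\sum_ic_i^2$, whence $\max_{i\ne j}\langle\hat g_i,\hat g_j\rangle\ge 1/(k(k+1))$. Therefore some pair has $\langle g_i,g_j\rangle\ge\|g_i\|_1\|g_j\|_1/(k(k+1))\ge c_k\binom nk$, i.e. $\mathrm{Var}_\pi\langle(w_i)_\pi,w_j\rangle\ge c_k'n^k$ for some $i\ne j$.

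At this point the elementary bound yields only $\disc^+\disc^-\ge c_k'n^k$, a factor of $n$ short, and closing this gap is the heart of the matter; it is precisely what the single-transposition analysis of Section \ref{trans} is designed to supply. The reason the second moment undershoots is that a \emph{typical} $\pi$ keeps $\langle w_\pi,u\rangle$ within $O(\sqrt{\mathrm{Var}})$ of its mean, whereas the \emph{extremal} $\pi$ does much better: realigning the $\Theta(n)$ ``coordinates'' of the heavy shared level gains a further $\sqrt n$ on each of $\disc^+$ and $\disc^-$, which is the pair analogue of the Erd\H os--Spencer phenomenon behind \eqref{esresult} and \eqref{bsresult}. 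Concretely I would estimate the effect of a swap $\tau=(ab)$ on $\langle w_\pi,u\rangle$, show it is controlled by the data of the shared level, and assemble a sequence of swaps that drives the intersection both well above and well below $0$; the delicate issue — and the main obstacle — is that every level moves under one common permutation, so one must disentangle the targeted level from the rest and reconcile the ``spread'' regime (where the variance is small but the realignment gain is large) with the ``spiky'' regime (where the variance is already large) into a single clean bound. Granting the resulting general estimate (Theorem \ref{thdisc}), the heavy shared level of our pair forces $\disc^+(w_i,w_j)\,\disc^-(w_i,w_j)\ge c\,n^{k+1}$, and then $\disc(w_i,w_j)=\max\{\disc^+,\disc^-\}\ge\sqrt{\disc^+\disc^-}\ge c'\,n^{(k+1)/2}$, as required.
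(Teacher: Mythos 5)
Your route to locating the good pair is genuinely different from the paper's, and the parts you actually carry out are correct: the inequality $\disc^+(w,u)\disc^-(w,u)\ge\mathrm{Var}_\pi\langle w_\pi,u\rangle$, the Schur-orthogonality evaluation of the variance over the isotypic decomposition of $\R^{[n]^{(k)}}$, and the singular-Gram-matrix pigeonhole for $k+1$ nonnegative vectors in $\R^k$ are all sound, and together they give $\disc^+\disc^-\ge c_kn^{k}$ for some pair. The paper's own reduction is much shorter: Lemma \ref{thmb} gives $\sum_{i=0}^kW_i\ge c$, the hypothesis $w([n])=0$ kills $W_0$ by \eqref{wv1}, so each hypergraph has some component $W_\ell\ge c/k$ with $1\le\ell\le k$, and pigeonholing $k+1$ hypergraphs into $k$ slots hands a shared heavy $W$-component directly to Theorem \ref{thdisc}.

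The genuine gap is in your final step. Theorem \ref{thdisc} is stated in terms of $W$-vectors: to extract $\disc^+\disc^-\ge cn^{k+1}$ you need a pair and an index $\ell\ge1$ with $W_\ell U_\ell\ge c\,n^{(\ell-k)/2}$ (for instance $W_\ell,U_\ell\ge c$). Your pigeonhole instead produces a pair sharing a heavy level in the $L^2$ sense, $\|w_{i,\ell}\|_2^2\|w_{j,\ell}\|_2^2/\dim U_\ell\ge c\binom nk$, and you assert without argument that this ``forces'' the hypothesis of Theorem \ref{thdisc}. The bridge is not automatic: by Lemma \ref{niceform}, $W_\ell=\E_\pi|\langle w_\pi,\phi_\ell^*\rangle|$ is an $L^1$ average whose $L^2$ counterpart is $\|w_\ell\|_2\|\phi_\ell^*\|_2/\sqrt{\dim U_\ell}$, and the inequality $L^1\le L^2$ points the wrong way; a lower bound on $W_\ell$ in terms of the $L^2$ quantity needs an anticoncentration input (say a fourth-moment estimate) that neither you nor the paper provides, and since $\|w\|_2$ is not controlled by $\|w\|_1$ the two notions of ``heavy'' can diverge. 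Either prove such a comparison, or replace the $L^2$ pigeonhole by the paper's $W$-vector pigeonhole via Lemma \ref{thmb}. (Deferring to Theorem \ref{thdisc} for the missing factor of $n$ is itself legitimate, since the paper deduces Theorem \ref{thma0} from exactly that result; the problem is only that your argument does not deliver the hypothesis in the form that theorem requires.)
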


We will also prove (Theorem \ref{thma1}) that every family $\mathcal F$ of weighted $k$-uniform hypergraphs $w$ with $w(V)=0$ can
be partitioned into $k$ families of hypergraphs with pairwise large discrepancy.

Theorems \ref{thma0} and \ref{thma1} both follow from a much stronger quantitative result (Theorem \ref{thdisc}), 
which will allow us to prove a lower bound on the discrepancy 
of a pair of weighted hypergraphs.  
In order to state this result, we need to introduce the {\em $W$-vector} of a weighted hypergraph
(the formal definition will require a little work, so we defer it to Section \ref{wvector}).
For every weighted $k$-uniform hypergraph $w$, we will define a sequence of $k+1$ nonnegative weights 
$W_0,\ldots,W_k$, giving us the $W$-vector $W=(W_0,\ldots,W_k)$.
As we shall see in Lemma \ref{thmb},
it turns out that the $W$-vector preserves the weight of $w$, in that there are constants $c,c'$ such that
\begin{equation}\label{nkahead}
c||w||_1/n^k \le \sum_{i=0}^kW_i\le c'||w||_1/n^k.
\end{equation}  
In particular, if $||w_i||_1=\binom nk$ then some component of the $W$-vector is at least a constant.

We can now state a quantitative version of Theorem \ref{thma0}. 

\begin{theorem}\label{thdisc}
For every $k\ge1$ there are $c,c'>0$ such that the following holds.
For every $n\ge k$ and every pair of weighted hypergraphs $w,u:[n]^{(k)}\to\R$, we have
$$\disc^+(w,u)\disc^-(w,u)\ge cn^{2k+1}\sum_{i=1}^k n^{-i}W_i^2U_i^2,$$
where $(W_0,\ldots,W_k)$ and $(U_0,\ldots,U_k)$ are the $W$-vectors of $w$ and $u$ respectively.
In particular, 
$$\disc(w,u)\ge c'n^{k+1/2}\sum_{i=1}^kn^{-i/2}W_iU_i.$$
\end{theorem}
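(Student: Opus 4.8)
The plan is to analyse the single random variable $X(\pi)=\langle w_\pi,u\rangle$, whose mean is $\mu=d(w)d(u)\binom nk$ by \eqref{d0} and whose maximum and minimum over $\pi$ are $\mu+\disc^+(w,u)$ and $\mu-\disc^-(w,u)$. Reindexing gives $X(\pi)=\sum_f w(f)u(\pi f)$, so that $\E_\pi X^2=\sum_{f,f'}w(f)w(f')\,\E_\pi[u(\pi f)u(\pi f')]$, and the inner expectation depends only on the intersection size $|f\cap f'|$, since a uniform $\pi$ carries $(f,f')$ to a uniformly random pair of $k$-sets with the same intersection size. Grouping pairs by $|f\cap f'|$ and diagonalising the resulting bilinear form through the orthogonal level-decomposition of functions on $[n]^{(k)}$---equivalently, invoking the first orthogonality relation of the Johnson scheme---collapses everything to $\mathrm{Var}_\pi X=\sum_{j=1}^k \|w_j\|_2^2\,\|u_j\|_2^2/m_j$, where $w_j,u_j$ are the projections onto the $j$-th eigenspace and $m_j=\binom nj-\binom n{j-1}$; the level $j=0$ drops out because it contributes exactly $\mu^2$. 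A first, crude bound is then immediate: for any random variable with mean $\mu$, writing $Y=X-\mu$ and using $\E Y^+=\E Y^-$ together with $Y^+\le\disc^+(w,u)$ and $Y^-\le\disc^-(w,u)$ pointwise gives $\mathrm{Var}_\pi X\le 2\,\disc^+(w,u)\disc^-(w,u)$.

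Unfortunately this variance inequality is a factor of order $n$ too weak: since $m_j=\Theta(\binom nj)$ and, by the normalisation built into the $W$-vector in Section \ref{wvector}, $\|w_j\|_2^2$ is an explicit power of $n$ times $W_j^2$, one finds $\mathrm{Var}_\pi X\asymp n^{2k}\sum_{j=1}^k n^{-j}W_j^2U_j^2$, whereas the target bound is larger by a factor $n$. The point is that the extreme values of $X$ lie a further factor $\sqrt n$ beyond its standard deviation, so one must control the \emph{extremes} of $X$ rather than merely its spread. This is where the analysis of a single transposition (Section \ref{trans}) enters. The idea is to build the permutation one vertex at a time: revealing the images $\pi(1),\pi(2),\dots$ in turn expresses $X-\mu$ as a sum of increments, each realised by the effect of a transposition, whose conditional second moments sum to $\mathrm{Var}_\pi X$. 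By the vertex-symmetry of the problem these per-step scales are balanced, of order $\mathrm{Var}_\pi X/n$ each; and because one may at each of the $\sim n$ steps select an increment of a prescribed sign and of magnitude comparable to the per-step standard deviation, the coherent accumulation of $n$ equal-signed increments amplifies the deviation by $\sqrt n$. Carrying this out upwards produces a permutation with $X-\mu\gtrsim\sqrt{n\,\mathrm{Var}_\pi X}$ and downwards one with $\mu-X\gtrsim\sqrt{n\,\mathrm{Var}_\pi X}$, whence $\disc^+(w,u)\disc^-(w,u)\gtrsim n\,\mathrm{Var}_\pi X$.

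Combining the variance identity with the bound $\disc^+(w,u)\disc^-(w,u)\gtrsim n\,\mathrm{Var}_\pi X$ gives $\disc^+(w,u)\disc^-(w,u)\gtrsim n\sum_{j=1}^k \|w_j\|_2^2\|u_j\|_2^2/m_j$, and feeding in $m_j=\Theta(\binom nj)$ together with the identification $\|w_j\|_2^2\asymp n^{k}W_j^2$ (and likewise for $u$) coming from the definition of the $W$-vector yields the claimed $\disc^+(w,u)\disc^-(w,u)\ge cn^{2k+1}\sum_{i=1}^k n^{-i}W_i^2U_i^2$. The unsigned statement follows at once: $\disc(w,u)\ge\sqrt{\disc^+(w,u)\disc^-(w,u)}\ge\sqrt c\,n^{k+1/2}\bigl(\sum_{i=1}^k n^{-i}W_i^2U_i^2\bigr)^{1/2}$, and the elementary inequality $\bigl(\sum_i a_i^2\bigr)^{1/2}\ge k^{-1/2}\sum_i a_i$ applied to $a_i=n^{-i/2}W_iU_i$ pulls the sum outside the square root, the factor $k^{-1/2}$ being absorbed into $c'$.

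The routine parts are the variance identity (a two-point orthogonality computation) and the final Cauchy--Schwarz step; the crux---and the step I expect to be the main obstacle---is the coherent-accumulation argument of the second paragraph. The delicate points there are to show that at each exposure step a transposition exists that moves $X$ in the desired direction by an amount comparable to the current per-step standard deviation, and that these per-step scales really are balanced across the $\sim n$ steps, so that the sum of $n$ increments reaches order $\sqrt{n\,\mathrm{Var}_\pi X}$ on \emph{both} the positive and the negative side. Making this sign- and magnitude-control precise is exactly what the single-transposition estimates of Section \ref{trans} are designed to supply, and it is the reason the $W$-vector is set up as it is.
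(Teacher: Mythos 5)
Your first and third paragraphs are sound: the Johnson-scheme identity $\mathrm{Var}_\pi X=\sum_{j\ge1}\|w_j\|_2^2\|u_j\|_2^2/m_j$ is correct (the eigenspaces are exactly the $V_j$ of Section \ref{proofs2}), the elementary bound $\mathrm{Var}_\pi X\le 2\disc^+(w,u)\disc^-(w,u)$ is correct, and the conversion $\|w_j\|_2^2\ge c\,n^kW_j^2$ does follow from Cauchy--Schwarz applied to $W_j=\E_\pi|\langle w_\pi,\phi_j^*\rangle|$ (only as an inequality in that direction, not the $\asymp$ you assert, but that is the direction you need). The gap is where you yourself located it, and it is fatal as written. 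The claim that one can climb to a permutation with $X-\mu\gtrsim\sqrt{n\,\mathrm{Var}_\pi X}$ \emph{and} descend to one with $\mu-X\gtrsim\sqrt{n\,\mathrm{Var}_\pi X}$ is false: for $w=K_{n/2,n/2}$ and $u=2K_{n/2}$ one has $\mathrm{Var}_\pi X=\Theta(n^2)$, so $\sqrt{n\,\mathrm{Var}_\pi X}=\Theta(n^{3/2})$, while $\disc^+(w,u)=O(n)$ --- this is the paper's own motivating example and the reason the theorem bounds the \emph{product} rather than each factor separately. Moreover the proposed mechanism does not establish even one side: the increment obtainable from a transposition depends on the current permutation (at a maximiser of $X$ no transposition increases $X$, so a greedy walk can stall at height $\disc^+$, possibly far below $\sqrt{n\,\mathrm{Var}_\pi X}$), and the effects of several transpositions do not add --- by Lemma \ref{coeff} the expected displacement under a $p$-random set of disjoint transpositions is $\delta(I)p+\sum_{i=2}^kA_ip^i$, and the interaction coefficients $A_i$ are exactly what can cancel the linear accumulation you rely on.

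The paper's Section \ref{trans} is built to survive both obstructions. It replaces the variance by the $L^1$ transposition statistic $\gamma(w,u)$ of \eqref{gamma} and proves the \emph{asymmetric} trade-off $\disc^+(w,u)\disc^-(w,u)\ge c^2\gamma(w,u)^2n^2$ (Lemma \ref{thmf}): if $\disc^+=\gamma n/10\alpha$ is small, then after choosing a placement with $\alpha\langle w,u\rangle+\Delta(I^+)\ge t\gamma/2$ the constraint $\langle w_{\tau^J},u\rangle\le\disc^+$ forces some coefficient $A_i$ to be large, and the polynomial anti-concentration Lemma \ref{poly} then produces a $p$ for which $\langle w_{\tau^J},u\rangle\le-c_k\alpha\gamma n$, i.e.\ $\disc^-\ge c_k\alpha\gamma n$. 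The theorem then follows from the separate inductive estimate \eqref{gammabound}, $\gamma(w,u)\ge cn^{k-1/2}\sum_{i=1}^kn^{-i/2}W_iU_i$, proved via the identity $\langle w,u\rangle-\langle w_\tau,u\rangle=\langle w^{xy},u^{xy}\rangle$ of \eqref{wwt}. To repair your argument you would need to replace the coherent-accumulation step by such a trade-off argument for the product $\disc^+\disc^-$ (against $n\,\mathrm{Var}_\pi X$ or its $\gamma$-analogue); the symmetric two-sided version you rely on cannot be made to work.
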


Theorem \ref{thdisc} bounds the discrepancy of a pair $w$, $u$ of weighted $k$-uniform hypergraphs in terms of the dot product of their $W$-vectors.   Note that, as $W$-vectors belong to $\R^{k+1}$, it does not exclude the possibility that we could have $k+1$ nontrivial hypergraphs that pairwise have discrepancy 0 (so Theorem \ref{orthogset} is consistent with Theorem \ref{thdisc}).
However, in light of \eqref{nkahead}, for any collection of $k+1$ hypergraphs $w_1,\ldots,w_{k+1}$ as in Theorem \ref{thma0}, each $w_i$ must have at least constant weight in some component of its $W$-vector.  Since all the $w_i$
have total weight $0$, it will follow from the definition of $W$-vectors (in particular, from \eqref{wv1}) that the 
$W$-vectors of the $w_i$ all have first component 0.  Since there are $k$ remaining components,
some pair $w_i$, $w_j$ must have constant weight in the same component.  Theorem \ref{thma0} then follows immediately from  Theorem \ref{thdisc}.

In addition to bounding the discrepancy, we will also prove a result on the
expectation of the intersection $|\langle w_\pi,u\rangle|$ of two weighted $k$-uniform hypergraphs, when $\pi$ is chosen uniformly at random.

For $k=1$ (i.e.~sequences),
the distribution of $\langle w_\pi,u\rangle$ has been extensively studied.
Wald and Wolfowitz \cite{WW44}
proved a central limit theorem for $\langle w_\pi,u\rangle$ (under suitable conditions),
and subsequent generalizations were given by 
Noether \cite{N49},
Hoeffding \cite{H51}, 
Dwass \cite{D53} and many other authors.
For $k=2$ (i.e.~graphs), 
random intersections
$\langle w_\pi,u\rangle$
arise naturally in a number of statistical applications
(for instance, Barbour and Chen \cite{BC05} mention
applications in geography and epidemiology:
see 
Moran \cite{M48}, Geary \cite{G54}, Knox \cite{K64}, Mantel \cite{M67} and Hubert \cite{H87}).
The distribution of $\langle w_\pi,u\rangle$
has been considered by many authors starting with Daniels \cite{D44}, and 
including 
Barton and David \cite{BD66},
Abe \cite{A69}, Barbour and Eagleson \cite{BE86,BE86b}
and  Barbour and Chen \cite{BC05}, and there are sophisticated central limit theorems.

In this paper, we consider general $k$, but do not determine the limiting distribution of $|\langle w_\pi,u\rangle|$.  
However, we give a weak bound on the concentration of the distribution of 
$|\langle w_\pi,u\rangle|$, by bounding
the expected value of $|\langle w_\pi,u\rangle|$.

\begin{theorem}\label{thexp}
For every $k\ge1$ there is $c>0$ such that, for every $n>2k$ and every
pair of weighted hypergraphs $w,u:[n]^{(k)}\to\R$,
\begin{equation}\label{expineq}
\E_\pi|\langle w_\pi,u\rangle|\ge cn^k\sum_{i=0}^k n^{-i/2}W_iU_i,
\end{equation}
where $(W_0,\ldots,W_k)$ and $(U_0,\ldots,U_k)$ are the $W$-vectors of $w$ and $u$ respectively.
\end{theorem}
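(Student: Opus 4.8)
The plan is to write $X=\langle w_\pi,u\rangle$ and bound $\E_\pi|X|$ through two contributions: the mean $\E_\pi X=d(w)d(u)\binom nk$, which will account for the $i=0$ term of \eqref{expineq}, and the fluctuation of $X$ about its mean, which accounts for the terms $i\ge1$. Writing $Z=X-\E_\pi X$, the elementary bounds $|\E_\pi X|\le\E_\pi|X|$ and $\E_\pi|Z|\le\E_\pi|X|+|\E_\pi X|\le2\E_\pi|X|$ give
$$\E_\pi|X|\ge\tfrac14\bigl(|\E_\pi X|+\E_\pi|Z|\bigr),$$
so it suffices to bound $|\E_\pi X|$ and $\E_\pi|Z|$ separately from below. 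For the first term, \eqref{d0} gives $|\E_\pi X|=|d(w)d(u)|\binom nk$, and by the definition of the $W$-vector (in particular \eqref{wv1}) the component $W_0$ is a fixed multiple of $|d(w)|$, and $U_0$ of $|d(u)|$; since $\binom nk$ is comparable to $n^k$, this yields $|\E_\pi X|\ge cn^kW_0U_0$, exactly the $i=0$ term.

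The substance of the proof is the lower bound $\E_\pi|Z|\ge cn^k\sum_{i=1}^kn^{-i/2}W_iU_i$, which I would obtain in two steps. The first step is to compute the second moment $\E_\pi Z^2$. Expanding $X^2=\sum_{e,f}w(\pi^{-1}e)w(\pi^{-1}f)\,u(e)u(f)$ and using that $\E_\pi[w(\pi^{-1}e)w(\pi^{-1}f)]$ depends only on $|e\cap f|$, the second moment is governed by the averaging operator on $[n]^{(k)}$ indexed by intersection size. The $W$-vector is built precisely to diagonalise this operator, so the levels $i=0,\dots,k$ are orthogonal and contribute independently; after centering removes level $0$, this should give
$$\E_\pi Z^2\ \ge\ cn^{2k}\sum_{i=1}^kn^{-i}W_i^2U_i^2.$$
This is the same level decomposition underlying Theorem \ref{thdisc}, and I expect it to be assembled from the single-transposition estimates of Section \ref{trans}.

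The second step is to pass from the second moment to $\E_\pi|Z|$ using the moment-comparison inequality
$$\E_\pi|Z|\ \ge\ \frac{(\E_\pi Z^2)^{3/2}}{(\E_\pi Z^4)^{1/2}},$$
which follows from H\"older's inequality applied to $|Z|^2=|Z|^{2/3}\cdot|Z|^{4/3}$ with exponents $3/2$ and $3$. This reduces the problem to an upper bound of the form $\E_\pi Z^4\le C_k(\E_\pi Z^2)^2$, i.e.\ to showing that $Z$ has uniformly bounded kurtosis. This is the main obstacle: $Z$ is a ``degree $\le k$'' statistic of the random permutation, so such a fourth-moment estimate is exactly a hypercontractive inequality for low-degree functions on the symmetric group (equivalently, on the Johnson scheme of $k$-sets). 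I would establish it either by expanding $\E_\pi Z^4$ directly, grouping the contributions according to the intersection pattern of the four participating $k$-sets and checking that each is comparable to the square of the corresponding second-moment term, or by invoking such a hypercontractive estimate level by level. The uniform bound on $n>2k$ should follow from the same combinatorial counting, with the hypothesis $n>2k$ ensuring the relevant intersection classes are non-degenerate.

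Granted the fourth-moment bound, the moment comparison gives $\E_\pi|Z|\ge c(\E_\pi Z^2)^{1/2}\ge cn^k\bigl(\sum_{i=1}^kn^{-i}W_i^2U_i^2\bigr)^{1/2}$. Since $\bigl(\sum_{i=1}^k a_i^2\bigr)^{1/2}\ge k^{-1/2}\sum_{i=1}^k a_i$ for nonnegative reals (Cauchy--Schwarz), applied with $a_i=n^{-i/2}W_iU_i$, this is at least $ck^{-1/2}n^k\sum_{i=1}^kn^{-i/2}W_iU_i$. Combining this with the mean term from the first paragraph yields \eqref{expineq}. The only genuinely delicate point is the fourth-moment (hypercontractivity) bound; the second-moment computation and the combinatorial bookkeeping are routine once the $W$-vector diagonalisation from the earlier sections is in place.
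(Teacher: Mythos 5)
Your reduction of the $i=0$ term to the mean is fine, and your claimed second-moment lower bound $\E_\pi Z^2\ge cn^{2k}\sum_{i=1}^kn^{-i}W_i^2U_i^2$ is plausible (it would follow from the orthogonal decomposition of Section \ref{proofs2} together with Cauchy--Schwarz in the form $W_i^2\le\E_\pi\langle w_\pi,\phi_i^*\rangle^2$). The fatal step is the fourth-moment bound $\E_\pi Z^4\le C_k(\E_\pi Z^2)^2$: it is false, and so is the consequence $\E_\pi|Z|\ge c_k(\E_\pi Z^2)^{1/2}$ that you extract from it. Take $k=1$ and $w=u$ with $w(v)=\mathbf 1(v=1)-1/n$. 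A direct computation gives $\langle w_\pi,u\rangle=\mathbf 1(\pi(1)=1)-1/n$, so $Z$ is a centred Bernoulli variable with parameter $1/n$: here $\E Z^2\sim 1/n$ and $\E Z^4\sim 1/n$, hence $\E Z^4/(\E Z^2)^2\sim n$, and $\E|Z|\sim 2/n$ falls short of $(\E Z^2)^{1/2}\sim n^{-1/2}$ by a factor of $n^{1/2}$. (The theorem itself survives because $W_1=U_1=2/n$, so its claimed lower bound is only of order $n^{-3/2}$.) The hypothesis $n>2k$ gives you nothing against this, since the example lives at $k=1$ with $n$ arbitrarily large. Low-degree statistics of a random permutation simply do not have uniformly bounded kurtosis; this is precisely why the paper's $W$-vector is built from first absolute moments $W_i=\E_\pi|\langle w_\pi,\phi_i^*\rangle|$ rather than $L^2$ projections, and why no step of the paper's argument ever passes through a second moment.

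The paper's actual route stays in $L^1$ throughout. It first proves (Lemma \ref{thme}) that $\E_\pi|\langle w_\pi,u\rangle|\ge c\gamma(w,u)\sqrt n$, where $\gamma(w,u)$ is the expected effect of a single transposition on the inner product; the $\sqrt n$ gain comes from applying a uniformly random subset of $\lfloor n/2\rfloor$ disjoint transpositions, using Lemma \ref{sumn} to extract a $\sqrt t$ factor from the signed sum of the individual effects and Lemma \ref{poly} to control the higher-order interaction terms appearing in Lemma \ref{coeff}. It then bounds $\gamma(w,u)\ge cn^{k-1/2}\sum_{i\ge1}n^{-i/2}W_iU_i$ by induction on $k$, using the identity $\langle w,u\rangle-\langle w_\tau,u\rangle=\langle w^{xy},u^{xy}\rangle$ to match the recursive definition of the $W$-vector through difference weightings. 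If you want to rescue a moment-comparison argument, you would need an upper bound on $\E_\pi Z^4$ expressed in terms of the $W_i$ themselves rather than in terms of $\E_\pi Z^2$, which is a different (and harder) statement than hypercontractivity.
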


\section{Notation and tools}\label{tools}

We use standard notation: $V^{(k)}$ denotes the collection of $k$-sets in $V$;  we shall often refer to these as {\em edges}.  
We write $[n]=\{1,\ldots,n\}$. 
For any function $f$, we write $f^+(x)=\max\{f(x),0\}$ and $f^-(x)=\max\{-f(x),0\}$.  

A {\em weighted $k$-uniform hypergraph with vertex set $V$} is simply a function $w:V^{(k)}\to\R$.   
For $S\subset V$, we define $w(S)=\sum_{e\in S^{(k)}}w(e)$.  Given weighted $k$-uniform hypergraphs $w$, $u$ on vertex set $V$, we
define a standard norm and inner product: $||w||_1=\sum_{e\in V^{(k)}}|w(e)|$ 
and $\langle w,u\rangle=\sum_{e\in V^{(k)}}w(e)u(e)$.  
The {\em density} of $w$ is $d(w)=w(V)/\binom{|V|}{k}$.
We also define the constant
function $\mathbf 1$ by $\mathbf 1(e)=1$ for every edge $e$.
We will feel free to move without comment between a hypergraph $H$, and the corresponding weight function $w=w_H$ defined by
$w(e)={\mathbf 1}(e\in E(H))$.

There is a natural action of permutations of $V$ on weighted hypergraphs.  Given a function $f:V^{(k)}\to \R$ and a permutation $\pi$ of $V$, we define the function $f_\pi$
by $f_\pi(e)=f(\pi^{-1}(e))$.  Thus for permutations $\pi$, $\rho$, we have $f_{\pi\rho}=(f_\rho)_\pi$,
as $f_{\pi\rho}(e)=f((\pi\rho)^{-1}e)=f(\rho^{-1}\pi^{-1}e)=f_\rho(\pi^{-1}e)=(f_\rho)_\pi(e)$.

We say that weighted $k$-uniform hypergraphs $w$ on vertex set $V$ and $u$ on vertex set $U$ are {\em isomorphic} if there is
a bijection $f:V\to U$ such that $u(f(e))=w(e)$ for every edge $e\in V^{(k)}$.  Clearly $w$ and $w_\pi$ are isomorphic for any $\pi\in S(V)$. 

For weighted hypergraphs $w$, $u$, the {\em positive discrepancy} $\disc^+(w,u)$ and {\em negative discrepancy} $\disc^-(w,u)$ 
are defined as in \eqref{wdplus} and \eqref{wdminus}; we then set $\disc(w,u)=\max(\disc^+(w,u),\disc^-(w,u))$. 

Throughout the paper we will take expectations over randomly chosen vertices or edges.  Unless otherwise specified, this will always be with respect to the uniform distribution.
We will also adopt the convention that $\E'$ and ${\sum}'$ denote expectation and sum over {\em distinct} choices of argument: for instance if we are choosing random vertices from $V$, then $\E_{x,y}$ denotes the
expectation over the $|V|^2$ possible choices of an ordered pair $(x,y)$, while $\E_{x,y}'$ denotes the expectation over the 
$|V|(|V|-1)$ possible ordered pairs $(x,y)$ such that $x\ne y$, with respect to the uniform distribution in both cases.
Finally, if we take expectations with respect to a permutation $\pi$, then unless stated otherwise this will always be taken to be chosen uniformly at random from the symmetric
group $S(V)$ on $V$.

It will be useful to note a few elementary facts.

\begin{lemma}\label{poly}
Let $k\ge1$ be fixed.  There is a constant $c_k>0$ such that every polynomial $f(x)=\sum_{i=0}^ka_ix^i$ with $\max_i|a_i|=1$ satisfies
\begin{equation}\label{poly2}
\int_0^1|f(x)|dx\ge c_k.
\end{equation}
In particular, this implies
\begin{equation}\label{poly1}
\sup_{x\in[0,1]}|f(x)|\ge c_k.
\end{equation}
\end{lemma}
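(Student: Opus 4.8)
The plan is to treat $\int_0^1|f(x)|\,dx$ as a norm on the $(k+1)$-dimensional space of polynomials of degree at most $k$ and then invoke compactness to extract a uniform constant. Concretely, I would identify each polynomial $f(x)=\sum_{i=0}^k a_ix^i$ with its coefficient vector $\mathbf a=(a_0,\ldots,a_k)\in\R^{k+1}$ and consider the function
$$F(\mathbf a)=\int_0^1\Bigl|\sum_{i=0}^k a_ix^i\Bigr|\,dx.$$
The hypothesis $\max_i|a_i|=1$ says exactly that $\mathbf a$ lies on the unit sphere $K=\{\mathbf a\in\R^{k+1}:\max_i|a_i|=1\}$ of the $\ell^\infty$ norm, which is closed and bounded, hence compact. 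So it will suffice to show that $F$ is continuous and strictly positive on $K$ and then set $c_k=\min_{\mathbf a\in K}F(\mathbf a)$, which is attained and positive.

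For continuity I would use the elementary Lipschitz bound: writing $g(x)=\sum_i b_ix^i$, for every $x\in[0,1]$ we have $\bigl||f(x)|-|g(x)|\bigr|\le|f(x)-g(x)|\le\sum_{i=0}^k|a_i-b_i|\,x^i\le(k+1)\max_i|a_i-b_i|$, and integrating over $[0,1]$ gives $|F(\mathbf a)-F(\mathbf b)|\le(k+1)\max_i|a_i-b_i|$. For strict positivity, suppose $F(\mathbf a)=0$. Since $f$ is continuous, $\int_0^1|f|=0$ forces $f\equiv0$ on $[0,1]$; but a nonzero polynomial of degree at most $k$ has at most $k$ roots, so $f$ must be the zero polynomial, i.e.\ $\mathbf a=\mathbf 0$, contradicting $\mathbf a\in K$. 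Hence $F>0$ on $K$, and a continuous positive function on a compact set has a positive minimum $c_k$, giving \eqref{poly2}.

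The bound \eqref{poly1} will then follow immediately, since $\int_0^1|f(x)|\,dx\le\sup_{x\in[0,1]}|f(x)|$ shows $\sup_{x\in[0,1]}|f(x)|\ge\int_0^1|f(x)|\,dx\ge c_k$. I do not expect any genuinely hard step here: the only point that requires care is the definiteness of the functional (that $\int_0^1|f|=0$ implies $f\equiv0$), which reduces to the finiteness of the zero set of a nonzero polynomial; everything else is the standard fact that all norms on a finite-dimensional space are comparable, packaged here as a continuity-plus-compactness argument that makes the dependence of $c_k$ on $k$ (and nothing else) transparent.
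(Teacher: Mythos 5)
Your argument is correct and is essentially identical to the paper's proof, which also defines $F(\mathbf a)=\int_0^1|\sum_i a_ix^i|\,dx$ on the $\ell^\infty$ unit sphere $[-1,1]^{k+1}\setminus(-1,1)^{k+1}$ and concludes by continuity, strict positivity, and compactness. You simply spell out the continuity and definiteness checks that the paper leaves implicit.
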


\begin{proof}
The proof is straightforward.  For ${\mathbf a}=(a_0,\ldots,a_k)\in[-1,1]^{k+1}\setminus(-1,1)^{k+1}$, let
$F(\mathbf a)=\int_0^1|\sum_{i=0}^ka_ix^i|dx$.  Then $F$ is continuous and strictly positive, and so we are done by compactness.
\end{proof}

The following simple bound is proved in \cite{BS06}.

\begin{lemma}\label{sumn}
Let ${\mathbf a}=(a_i)_{i=1}^{n}$ be a sequence of real numbers and $I\subset \{1,\ldots,n\}$ a subset chosen uniformly at random.  Then
$$\E|\sum_{i\in I}a_i|\ge\frac{||\mathbf a||_1}{\sqrt{8n}}.$$
\end{lemma}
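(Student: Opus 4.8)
The plan is to reduce the statement to a sharp form of Khintchine's inequality for random signs, and then to pass from the $\ell_2$ norm back to the $\ell_1$ norm by Cauchy--Schwarz. First I would symmetrize. Write $S=\sum_{i\in I}a_i$ and let $S^c=\sum_{i\notin I}a_i$ be the complementary sum. Since the complement $I^c$ of a uniformly random subset is again uniformly random, $S$ and $S^c$ have the same distribution, so $\E|S|=\E|S^c|$. Putting $\xi_i=+1$ if $i\in I$ and $\xi_i=-1$ otherwise, the $\xi_i$ are independent uniform $\pm1$ signs and $S-S^c=\sum_{i=1}^n\xi_i a_i=:T$. The triangle inequality gives $|S|+|S^c|\ge|S-S^c|=|T|$, and taking expectations yields $2\E|S|\ge\E|T|$, that is, $\E|S|\ge\tfrac12\E|T|$. (Equivalently, writing the indicator of $i\in I$ as $(1+\xi_i)/2$, one has $S=\tfrac12(\sum_i a_i + T)$ and uses the symmetry of $T$; this makes clear that the factor $\tfrac12$ is genuinely needed, since it is attained when $\sum_i a_i=0$.)

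The core step is the \emph{lower} Khintchine bound with the best constant, namely
\begin{equation}\label{khin}
\E|T|=\E\Bigl|\sum_{i=1}^n\xi_i a_i\Bigr|\ge\frac{1}{\sqrt2}\,\|\mathbf{a}\|_2,\qquad \|\mathbf{a}\|_2=\Bigl(\sum_{i=1}^n a_i^2\Bigr)^{1/2}.
\end{equation}
Granting \eqref{khin}, the proof finishes at once: by Cauchy--Schwarz $\|\mathbf{a}\|_1\le\sqrt n\,\|\mathbf{a}\|_2$, so $\|\mathbf{a}\|_2\ge\|\mathbf{a}\|_1/\sqrt n$, and hence
\begin{equation}\label{combine}
\E|S|\ge\tfrac12\E|T|\ge\frac{1}{2\sqrt2}\,\|\mathbf{a}\|_2\ge\frac{1}{2\sqrt2}\cdot\frac{\|\mathbf{a}\|_1}{\sqrt n}=\frac{\|\mathbf{a}\|_1}{\sqrt{8n}},
\end{equation}
as required. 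It is worth noting that the whole chain is tight simultaneously at $\mathbf{a}=(1,-1)$ (here $\E|S|=\tfrac12$ while $\|\mathbf{a}\|_1/\sqrt{8n}=\tfrac12$), which both confirms the constant and shows that no step can be improved.

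The main obstacle is establishing \eqref{khin} with the \emph{sharp} constant $1/\sqrt2$: this is exactly the extremal case of the $L_1$ Khintchine inequality, and I would import it as a known result. A purely elementary fourth-moment argument gives a weaker constant: from $\E T^2=\|\mathbf{a}\|_2^2$ and $\E T^4=3\|\mathbf{a}\|_2^4-2\sum_i a_i^4\le3\|\mathbf{a}\|_2^4$, the Hölder interpolation $\|T\|_2\le\|T\|_1^{1/3}\|T\|_4^{2/3}$ yields $\E|T|\ge(\E T^2)^{3/2}/(\E T^4)^{1/2}\ge\|\mathbf{a}\|_2/\sqrt3$. Feeding this into \eqref{combine} would give the slightly weaker conclusion $\E|S|\ge\|\mathbf{a}\|_1/\sqrt{12n}$; to reach the stated bound $\|\mathbf{a}\|_1/\sqrt{8n}$ one does need the optimal Khintchine constant, so the sharp inequality \eqref{khin} is the real content of the lemma.
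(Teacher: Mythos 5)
Your proof is correct. Note that the paper does not actually prove Lemma \ref{sumn}; it is quoted from \cite{BS06}, so there is no in-text argument to compare against. Your route --- symmetrize so that $2\E|S|\ge\E|T|$ with $T=\sum_i\xi_i a_i$ a Rademacher sum, apply the sharp $L_1$ Khintchine inequality $\E|T|\ge\|\mathbf a\|_2/\sqrt 2$, and finish with Cauchy--Schwarz --- is a clean way to obtain the stated constant, and your check that the whole chain is simultaneously tight at $\mathbf a=(1,-1)$ correctly confirms that $\sqrt{8n}$ cannot be improved. The one caveat is that the constant $1/\sqrt2$ in the $L_1$ Khintchine inequality is Szarek's theorem, which is far from elementary, so your argument is only as self-contained as that citation; you are right that the fourth-moment interpolation only yields $\E|T|\ge\|\mathbf a\|_2/\sqrt3$ and hence $\|\mathbf a\|_1/\sqrt{12n}$. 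It is worth observing that this weaker, fully elementary bound would serve every purpose the lemma is put to here: it enters only through \eqref{part1} in the proof of Lemma \ref{thme}, where the constant is immediately absorbed into $c_k$. So if you wish to avoid invoking Szarek, you can simply carry the constant $\sqrt{12n}$ through with no loss to any downstream result.
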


It will also be useful to note the following elementary fact.

\begin{proposition}\label{simpleprop}
If $X$ is a random variable with $\E X=0$, and $c\in \R$, then $\E|X+c|\ge\max\{\E|X|/2,|c|\}$.
\end{proposition}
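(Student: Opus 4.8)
The statement to prove is Proposition~\ref{simpleprop}: if $\E X=0$ and $c\in\R$, then $\E|X+c|\ge\max\{\E|X|/2,|c|\}$.

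\medskip

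The plan is to establish the two lower bounds separately, since the maximum of two quantities is bounded below by each. First I would prove $\E|X+c|\ge|c|$, which is the easier half: since $\E X=0$, linearity gives $\E(X+c)=c$, and then Jensen's inequality (or simply the fact that the absolute value of an expectation is at most the expectation of the absolute value) yields
$$\E|X+c|\ge|\E(X+c)|=|c|.$$

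For the second bound $\E|X+c|\ge\E|X|/2$, I would exploit the triangle inequality in both directions together with the first bound. Writing $|X|\le|X+c|+|c|$ and taking expectations gives $\E|X|\le\E|X+c|+|c|$. Combining this with $|c|\le\E|X+c|$ from the first part, I get
$$\E|X|\le\E|X+c|+|c|\le 2\,\E|X+c|,$$
and rearranging yields $\E|X+c|\ge\E|X|/2$, exactly as required. Having both inequalities, the conclusion $\E|X+c|\ge\max\{\E|X|/2,|c|\}$ follows immediately.

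\medskip

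This proposition is elementary and I do not anticipate a genuine obstacle; the only point requiring a little care is making sure the hypothesis $\E X=0$ is actually used (it enters precisely in computing $\E(X+c)=c$, which drives both bounds). A potential alternative for the factor-of-two bound would be to split on the sign of $c$ and argue via symmetry, but the triangle-inequality route above is cleaner and avoids case analysis. The constant $1/2$ is best possible, as can be seen by taking $X$ to be $\pm c$ each with probability $1/2$, for which $\E|X|=|c|$ and $\E|X+c|=|c|=\E|X|/2$; I would not include this in a formal proof but it confirms the bound cannot be improved.
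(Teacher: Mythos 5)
Your proof is correct. The bound $\E|X+c|\ge|c|$ is obtained exactly as in the paper (via $\E|X+c|\ge|\E(X+c)|=|c|$), but your derivation of $\E|X+c|\ge\E|X|/2$ takes a genuinely different, and arguably slicker, route: the paper first reduces to $c>0$, notes $\E X^+=\E X^-=\E|X|/2$, and bounds $\E|X+c|\ge\E[(X+c)\mathbf 1_{(X\ge0)}]\ge\E[X\mathbf 1_{(X\ge0)}]=\E X^+$, whereas you deduce the second bound from the first via the pointwise triangle inequality $|X|\le|X+c|+|c|$, so that $\E|X|\le\E|X+c|+|c|\le2\E|X+c|$. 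Your argument avoids both the sign reduction and the positive-part decomposition, and makes transparent that the factor $1/2$ comes from combining the triangle inequality with the $|c|$ bound; the paper's version has the mild advantage of isolating where the hypothesis $\E X=0$ enters (namely $\E X^+=\E X^-$). One small quibble with your closing aside: for $X=\pm c$ with probability $1/2$ each you have $\E|X|=|c|$ and $\E|X+c|=|c|$, so $\E|X+c|=\E|X|$, not $\E|X|/2$; that example shows tightness of the $|c|$ part of the maximum, not of the constant $1/2$ (for the latter, take $X=-c$ with probability $1-p$ and $X=c(1-p)/p$ with probability $p$, and let $p\to0$). Since you explicitly excluded this remark from the formal proof, it does not affect correctness.
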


\begin{proof}
We may assume $c>0$.  We have $\E X^+=\E X^-=\E |X|/2$.  But
$\E|X+c|\ge\E[(X+c){\mathbf 1}_{(X\ge0)}]\ge\E[X{\mathbf 1}_{(X\ge0)}]=\E X^+=\E |X|/2$.  Also,
$\E|X+c|\ge\E(X+c)=\E X+c=c$.
\end{proof}

\section{The $W$-vector}\label{wvector}

Given a weighted hypergraph $w:V^{(k)}\to\R$, where $V$ is a set of size $n\ge 2k\ge 0$,
we define in this section a corresponding {\em $W$-vector} $(W_0,\ldots,W_k)$, where each $W_i$ is a nonnegative real.  

We 
start by defining
\begin{equation}\label{wv1}
W_0=|d(w)|=|w(V)|/\binom nk=\binom{n}{k}^{-1}|\sum_{e\in V^{(k)}}w(e)|=|\E_ew(e)|,
\end{equation}
where we write $\E_e$ for the expectation over an edge $e$ chosen uniformly at random over all $\binom nk$ possibilities.
Clearly $W_0=0$ if and only if $w(V)=0$.

For $i\ge1$, we define $W_i$ recursively.  For each $\{x,y\}\in V^{(2)}$, 
the {\em difference weighting} $w^{xy}$ is defined on 
sets $e\in(V\setminus\{x,y\})^{(k-1)}$ by
$$w^{xy}(e)=w(e\cup \{x\})-w(e\cup\{y\}).$$
Note that $w^{xy}=-w^{yx}$.
For any choice of distinct $x$ and $y$, the difference weighting $w^{xy}$ has a $W$-vector $(W_0^{xy},\ldots,W_{k-1}^{xy})$.  
We can therefore define, for $1\le i\le k$,
\begin{equation}\label{wv2}
W_i=\frac{1}{n(n-1)}{\sum_{x,y}}'W_{i-1}^{xy}=\E_{x,y}'W_{i-1}^{xy},
\end{equation}
where, as usual, we write $\E'$ and $\sum'$ for the expectation and sum over distinct indices.
Note that the $W$-vector is well-defined, 
as the $W$-vector for a weighting of $k$-sets is given in terms of the $W$-vectors for weightings of 
various collections of $(k-1)$-sets.  

For example, in the trivial case $k=0$, a weighting is just a constant $w$, 
and the $W$-vector is $(W_0)$, where $W_0=|w|$.  
For $k=1$, we have a weight function $w:V\to\R$.  
If $|V|=n$, we have $W_0=|\sum_{v\in V}w(v)|/n$.  Now for distinct $x,y\in V$, $w^{xy}$ is
a weighting on the $(k-1)$-sets, which in this case is just a weight (on the empty set) given by
$$w^{xy}(\emptyset)=w(\{x\}\cup\emptyset)-w(\{y\}\cup\emptyset)=w(x)-w(y),$$
so $w^{xy}$ has $W$-vector given by $W^{xy}_0=W^{yx}_0=|w(x)-w(y)|$.
We then have
\begin{equation}\label{w1def}
W_1=\frac{1}{n(n-1)}{\sum_{x,y}}'W^{xy}_0=\E'_{x,y}|w(x)-w(y)|.
\end{equation}

Defining the $W$-vector by \eqref{wv1} and \eqref{wv2} will be helpful in some of
the proofs below.  However, we now give a second approach 
that allows us to write the $W$-vector in a form that is frequently more convenient.

We begin by choosing an arbitrary sequence $x_1,y_1,\ldots,x_k,y_k$ of $2k$ distinct vertices in $V$.  
For $i=1,\ldots,k$, we define $Y_i=\{y_1,\ldots,y_i\}$ and
${\mathbf s_i}=(x_1,y_1,\ldots,x_i,y_i)$.  We say that a set $A\in V^{(k)}$ is {\em compatible with ${\mathbf s_i}$} 
if $|A\cap\{x_j,y_j\}|=1$ for $j=1,\ldots,i$.  We define
weighted $k$-uniform hypergraphs $\phi_i$ and $\phi_i^*$ by
\begin{equation}\label{canonical}
\phi_i(A)=\phi_i(x_1,y_1,\ldots,x_i,y_i;A)=
\begin{cases}
(-1)^{|A\cap Y_i|}	&	\mbox{$A$ compatible with $s_i$}\\
0										&	\mbox{otherwise.}
\end{cases}
\end{equation}
and 
\begin{equation}\label{canonicalstar}
\phi_i^*=\phi_i\big/\binom{n-2i}{k-i}.
\end{equation}
Note that we have normalized so that $||\phi_i^*||_1=\Theta(1)$.

The definitions of $\phi_i$ and $\phi_i^*$ depend on the sequence of vertices we pick for 
$x_1,y_1,\ldots,x_i,y_i$.  However, different choices give isomorphic weightings, and in practice we will always symmetrize over permutations of the vertices, as in \eqref{nice2} below, so our results do not depend on our particular choices.

\begin{lemma}\label{niceform}
Let $n\ge 2k\ge 1$, and suppose that $w$ is a weighted $k$-uniform hypergraph on vertex set $V$, where $|V|=n$.  
Let $\phi_i$ and $\phi_i^*$ be defined as in \eqref{canonical} and \eqref{canonicalstar}.
Then
\begin{equation}\label{nice2}
W_i=\E_\pi|\langle w_\pi,\phi_i^*\rangle|.
\end{equation}
\end{lemma}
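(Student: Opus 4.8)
The plan is to prove \eqref{nice2} by induction on $k$, matching the recursive definition \eqref{wv2} of the $W$-vector against a recursive decomposition of the right-hand side $\E_\pi|\langle w_\pi,\phi_i^*\rangle|$. The base case is $i=0$: here $\phi_0^*=\mathbf 1$ (the empty compatibility condition makes every $k$-set compatible with weight $(-1)^0=1$, and $\binom{n}{k}$ is the normalising factor, so $\langle w_\pi,\phi_0^*\rangle = \binom nk^{-1}\sum_e w_\pi(e)=d(w_\pi)=d(w)$ for every $\pi$, giving $\E_\pi|\langle w_\pi,\phi_0^*\rangle|=|d(w)|=W_0$ by \eqref{wv1}). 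For the inductive step I would fix $i\ge 1$ and relate the inner product $\langle w_\pi,\phi_i^*\rangle$ to an inner product of a difference weighting $w^{xy}$ against $\phi_{i-1}^*$ on the smaller ground set $V\setminus\{x,y\}$.

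The key computation is to unwind the definition of $\phi_i$ on its first coordinate pair $\{x_1,y_1\}$. A $k$-set $A$ compatible with $\mathbf s_i$ contains exactly one of $x_1,y_1$; writing $A=A'\cup\{x_1\}$ or $A=A'\cup\{y_1\}$ with $A'\subset V\setminus\{x_1,y_1\}$, the sign $(-1)^{|A\cap Y_i|}$ factors so that summing $w(A)\phi_i(A)$ over the two choices of which vertex of the first pair lies in $A$ produces exactly the difference $w(A'\cup\{x_1\})-w(A'\cup\{y_1\})=w^{x_1y_1}(A')$, multiplied by the sign $\phi_{i-1}$ attached to the remaining pairs $\{x_2,y_2\},\ldots,\{x_i,y_i\}$ restricted to the ground set $V\setminus\{x_1,y_1\}$. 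Tracking the normalising factors $\binom{n-2i}{k-i}$ versus $\binom{(n-2)-2(i-1)}{(k-1)-(i-1)}=\binom{n-2i}{k-i}$ — which agree — this shows
\begin{equation}\label{planstep}
\langle w_\pi,\phi_i^*\rangle = \langle (w_\pi)^{x_1y_1},\,\psi_{i-1}^*\rangle,
\end{equation}
where $(w_\pi)^{x_1y_1}$ is the difference weighting of $w_\pi$ across the fixed pair $\{x_1,y_1\}$, and $\psi_{i-1}^*$ is the canonical weighting $\phi_{i-1}^*$ for the sequence $x_2,y_2,\ldots,x_i,y_i$ on the $(k-1)$-uniform hypergraph over $V\setminus\{x_1,y_1\}$.

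The final step is to take the expectation over $\pi$. The idea is to split a uniform random $\pi\in S(V)$ according to the ordered pair $(a,b)=(\pi^{-1}x_1,\pi^{-1}y_1)$ of preimages of the two fixed vertices: conditioned on $(a,b)$, the difference weighting $(w_\pi)^{x_1y_1}$ restricted to $V\setminus\{x_1,y_1\}$ is isomorphic (via the residual random permutation) to the weighting $w^{ab}$ of the original hypergraph $w$ across the pair $\{a,b\}$, transported by a uniformly random bijection. Applying the inductive hypothesis \eqref{nice2} for $k-1$ to each $w^{ab}$ gives $\E_{\text{residual }\pi}|\langle (w_\pi)^{x_1y_1},\psi_{i-1}^*\rangle| = W_{i-1}^{ab}$, and since $(a,b)$ is uniform over ordered pairs of distinct vertices, averaging yields $\E_\pi|\langle w_\pi,\phi_i^*\rangle| = \E'_{a,b}W_{i-1}^{ab}=W_i$ by \eqref{wv2}, completing the induction. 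The main obstacle I anticipate is bookkeeping in \eqref{planstep}: one must verify the sign conventions ($w^{xy}=-w^{yx}$, so the absolute value inside the expectation absorbs the orientation ambiguity) and confirm that the binomial normalisers match exactly across the dimension drop, so that the induction closes with no stray constant factors.
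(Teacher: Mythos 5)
Your proposal is correct and follows essentially the same route as the paper: the base case $i=0$ is the same direct computation, your identity \eqref{planstep} is exactly the paper's key step (its equations \eqref{psph} and \eqref{wpps}, with the matching normalisers $\binom{(n-2)-2(i-1)}{(k-1)-(i-1)}=\binom{n-2i}{k-i}$), and your averaging over $\pi$ by conditioning on the preimages $(a,b)=(\pi^{-1}x_1,\pi^{-1}y_1)$ and invoking the isomorphism $(w_\pi)^{x_1y_1}\cong w^{ab}$ is precisely how the paper closes the induction via $\E_\pi W^{\pi(x_1)\pi(y_1)}_{i-1}$ and the uniformity of the composed permutation. The sign issue you flag ($w^{xy}=-w^{yx}$) is indeed harmless for exactly the reason you give.
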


\begin{proof}
For $i=0$, we have
\begin{align*}
W_0
&=\binom nk^{-1}|\sum_{e\in V^{(k)}}w(e)|\\
&=\binom nk^{-1}|\langle w,{\mathbf 1}\rangle|\\
&=\E_\pi\binom nk^{-1}|\langle w_\pi,{\mathbf 1}\rangle|\\
&=\E_\pi|\langle w_\pi,\phi_0^*\rangle|,
\end{align*}
as $\phi_0=\mathbf 1$, $\phi_0^*=\phi_0/\binom nk$
and $\langle w_\pi,\mathbf 1\rangle=\langle w,\mathbf 1\rangle$ for every $\pi$.

We now proceed by induction on $i$.  For $i\ge1$, we have $W_i=\E_{x,y}'W_{i-1}^{xy}$.  
Choose a sequence $x_1,y_1,\ldots,x_k,y_k$ of distinct vertices, and let
$W=V\setminus\{x_1,y_1\}$.  We define $\phi_i$, $\phi_i^*$ as in \eqref{canonical} and \eqref{canonicalstar},
and let $\psi_i$, $\psi_i^*$ be the corresponding functions for $W$ and the sequence $x_2,y_2,\ldots,x_k,y_k$.
Thus $\psi_{i-1}:W^{(k-1)}\to\R$ is given by
\begin{equation}\label{psican}
\psi_{i-1}(A)=\phi_{i-1}(x_2,y_2,\ldots,x_i,y_i;A)
\end{equation}
and
\begin{equation}\label{psicanstar}
\psi_{i-1}^*=\psi_{i-1}/\binom{(n-2)-2(i-1)}{(k-1)-(i-1)}=\psi_{i-1}/\binom{n-2i}{k-i}.
\end{equation}
Note that for $e\in W^{(k-1)}$, it follows from \eqref{canonical}, \eqref{canonicalstar},
\eqref{psican} and \eqref{psicanstar} that we have
\begin{equation}\label{psph}
\psi_{i-1}^*(e)=\phi_i^*(e\cup\{x\})=-\phi_i^*(e\cup\{y\}).
\end{equation}

It follows by induction from \eqref{nice2} that, writing $(W_0^{x_1y_1},\ldots,W_{k-1}^{x_1y_1})$ for the $W$-vector of $w^{x_1y_1}$,
\begin{equation}\label{indstep}
W^{x_1y_1}_{i-1}=\E_{\pi^*\in S(W)}|\langle (w^{x_1y_1})_{\pi^*},\psi_{i-1}^*\rangle|.
\end{equation}
We identify $\pi^*\in S(W)$ with the corresponding $\pi\in S(V)$ that fixes $x_1,y_1$ and otherwise acts as $\pi^*$.  
Then $(w_\pi)^{x_1y_1}=(w^{x_1y_1})_{\pi^*}$,
and so, by \eqref{psph},
\begin{align}
\langle (w^{x_1y_1})_{\pi^*},\psi_{i-1}^*\rangle
&=\sum_{e\in W^{(k-1)}}(w_\pi)^{x_1y_1}(e)\psi_{i-1}^*(e) \notag\\
&=\sum_{e\in W^{(k-1)}}(w_\pi(e\cup\{x_1\})-w_\pi(e\cup\{y_1\}))\psi_{i-1}^*(e) \notag\\
&=\sum_{e\in W^{(k-1)}} (w_\pi(e\cup\{x_1\})\phi_i^*(e\cup\{x_1\})+w_\pi(e\cup\{y_1\})\phi_i^*(e\cup\{y_1\})) \notag\\
&=\sum_{f\in V^{(k)}}w_\pi(f)\phi_i^*(f) \notag\\
&=\langle w_\pi,\phi_i^*\rangle, \label{wpps}
\end{align}
since we have $\phi_i(f)=0$ unless $f$ is compatible with the sequence of vertices $(x_1,y_1,\ldots,x_i,y_i)$.  

For $i\ge1$, we have
$$
W_i
=\E_{x,y}'W^{xy}_{i-1}\\
=\E_\pi W^{\pi(x_1)\pi(y_1)}_{i-1},
$$
since $(\pi(x_1),\pi(y_1))$ is uniformly distributed over distinct vertices $x,y$.  
Now for an edge $e\in(V\setminus\{\pi(x),\pi(y)\})^{(k-1)}$,
\begin{align*}
w^{\pi(x_1)\pi(y_1)}(e)
&=w(e\cup\{\pi(x_1)\})-w(e\cup\{\pi(y_1)\})\\
&=w_\pi(\pi^{-1}(e)\cup\{x_1\})-w_\pi(\pi^{-1}(e)\cup\{y_1\})\\
&=(w_\pi)^{x_1y_1}(\pi^{-1}(e)).
\end{align*}  
It follows that $w^{\pi(x_1)\pi(y_1)}$ and $(w_\pi)^{x_1y_1}$ are isomorphic and so have the same
$W$-vector $((W_\pi^{x_1y_1})_0,\ldots,(W_\pi^{x_1y_1})_{k-1})$.  Using \eqref{indstep} and \eqref{wpps}, we get
$$(W_\pi^{x_1y_1})_{i-1}=\E_{\rho^*\in S(W)}|\langle ((w_\pi)^{x_1y_1})_{\rho^*},\psi_{i-1}^*\rangle|
=\E_{\rho^*\in S(W)}|\langle (w_\pi)_{\rho},\phi_{i-1}^*\rangle|.$$
Thus
\begin{align*}
W_i
&=\E_\pi W^{\pi(x_1)\pi(y_1)}_{i-1}\\
&=\E_\pi(W_\pi^{x_1y_1})_{i-1}\\
&=\E_{\pi,\rho^*}|\langle(w_\pi)_{\rho^*},\phi_i^*\rangle|\\
&=\E_{\pi,\rho^*}|\langle w_{\rho\pi},\phi_i^*\rangle|\\
&=\E_\pi|\langle w_\pi,\phi_i^*\rangle|,
\end{align*}
as $\rho\pi$ is uniformly distributed over $S(V)$.  This gives \eqref{nice2}.
\end{proof}

We remark that \eqref{nice2} is reminiscent of taking a Fourier transform.

In order to show that our theorems do not give trivial bounds, we need to know that the $l_1$ norm of a weighting is preserved 
up to a constant factor by its $W$-vector.  This is the substance of the next result.

\begin{lemma}\label{thmb}
For every $k\ge0$ there are constants $c,c'>0$ such that the following holds.
For every $n\ge2k$, and every weighted $k$-uniform hypergraph $w$
on $[n]$, 
\begin{equation}\label{thmbeq}
cn^{-k}||w||_1 \le \sum_{i=0}^k W_i\le c'n^{-k}||w||_1,
\end{equation}
where $(W_0,\ldots,W_k)$ is the $W$-vector of $w$.
\end{lemma}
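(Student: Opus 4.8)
The plan is to prove the scale-free two-sided estimate
$$\sum_{i=0}^k W_i\asymp \binom nk^{-1}||w||_1=\E_e|w(e)|,$$
from which \eqref{thmbeq} is immediate, since $n^{-k}\le\binom nk^{-1}\le k^kn^{-k}$. The upper bound I would get directly from the ``nice form'' of Lemma \ref{niceform}. By the triangle inequality, $W_i=\E_\pi|\langle w_\pi,\phi_i^*\rangle|\le\sum_{A}|\phi_i^*(A)|\,\E_\pi|w_\pi(A)|$, and for each fixed $A\in V^{(k)}$ the set $\pi^{-1}(A)$ is uniform over $V^{(k)}$, so $\E_\pi|w_\pi(A)|=\binom nk^{-1}||w||_1$. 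Since $\phi_i^*$ is supported on $2^i\binom{n-2i}{k-i}$ edges, each of modulus $\binom{n-2i}{k-i}^{-1}$, we have $||\phi_i^*||_1=2^i$, whence $W_i\le 2^i\binom nk^{-1}||w||_1$ and $\sum_{i=0}^kW_i\le(2^{k+1}-1)\binom nk^{-1}||w||_1$.

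For the lower bound I would induct on $k$ (the case $k=0$ being trivial, as $W_0=|w(\emptyset)|=\E_e|w(e)|$), using the recursive definition \eqref{wv2}. Write $\bar w(e)=\E_{v\notin e}w(e\cup\{v\})$ for the $(k-1)$-uniform weighting obtained by averaging out one vertex, and set $D=\E_{(e,v)}|w(e\cup\{v\})-\bar w(e)|$, the mean deviation of $w$ within cores (here $(e,v)$ ranges over flags $e\in V^{(k-1)}$, $v\notin e$). First, starting from $\sum_{i=1}^kW_i=\E'_{x,y}\sum_{j=0}^{k-1}W_j(w^{xy})$ and applying the inductive lower bound to each $(k-1)$-uniform $w^{xy}$ on $n-2\ge 2(k-1)$ vertices, I would reduce to bounding $\E'_{x,y}\binom{n-2}{k-1}^{-1}||w^{xy}||_1$ from below. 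Reorganising into the flag distribution and fixing the core $e$, the inner expectation is a Gini mean difference of the sequence $a_v=w(e\cup\{v\})$; a one-line Jensen argument gives $\E'_{x,y}|a_x-a_y|\ge\E_v|a_v-\bar w(e)|$, so that $\sum_{i=1}^kW_i\ge c_{k-1}D$.

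It remains to recover the ``mean'' part. The triangle inequality gives $\E_f|w(f)|\le D+\E_e|\bar w(e)|$, so I need to control $\E_e|\bar w(e)|$ by the $W$-vector of $w$ itself. This is where the crucial identity enters: applying the inductive lower bound to $\bar w$ gives $\E_e|\bar w(e)|\le c_{k-1}^{-1}\sum_{j=0}^{k-1}W_j(\bar w)$, and I claim $W_j(\bar w)\le W_j(w)$ for all $j$. Via Lemma \ref{niceform} this reduces to computing the action of the ``up'' operator $\psi^{\uparrow}(f)=\sum_{v\in f}\psi(f\setminus\{v\})$ on the canonical functions; a direct (but careful) count shows that non-compatible edges cancel and one is left with the clean identity $(\bar\phi_j)^{\uparrow}=(k-j)\phi_j$. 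Tracking the normalising binomials then yields $W_j(\bar w)=\frac{n-k-j+1}{n-k+1}W_j(w)\le W_j(w)$. Combining the three ingredients gives $\E_f|w(f)|\le c_{k-1}^{-1}\big(\sum_{i=1}^kW_i+\sum_{j=0}^{k-1}W_j\big)\le 2c_{k-1}^{-1}\sum_{i=0}^kW_i$, closing the induction with $c_k=c_{k-1}/2$.

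The main obstacle is precisely the comparison $W_j(\bar w)\le W_j(w)$: the difference operation underlying the $W_i$ annihilates the average of $w$ over each core, so that $\sum_{i\ge1}W_i$ alone cannot see a slowly varying hypergraph, and the averaged weighting $\bar w$ must be fed back into the induction. Making this rigorous hinges on the combinatorial identity $(\bar\phi_j)^{\uparrow}=(k-j)\phi_j$, whose verification requires checking that all edges meeting a pair $\{x_l,y_l\}$ in two vertices contribute zero by sign cancellation; the remaining binomial bookkeeping is routine and produces a factor in $(0,1]$.
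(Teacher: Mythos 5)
Your proof is correct, but it takes a genuinely different route from the paper's for the lower bound (the upper bound is identical to the paper's). The paper splits $w=w_0+w_1$ into a constant part plus a mean-zero part: $W_0$ captures $\|w_0\|_1$, and Proposition \ref{local} handles $w_1$ by a direct probabilistic argument --- pick two random edges, walk a shortest path between them, and use that a positive fraction of pairs $(e,f)$ have $w_1(e)\ge 0\ge w_1(f)$ --- to get $\E'_{x,y}\|w^{xy}\|_1\ge c\|w_1\|_1/n$, after which the induction on the $w^{xy}$ finishes. You instead average out one vertex to form $\bar w$, control the deviation term $D$ by the induction on $w^{xy}$ plus the Jensen/Gini inequality, and then feed $\bar w$ back into the induction as a $(k-1)$-uniform hypergraph on the same $n$ vertices; this obliges you to prove the comparison $W_j(\bar w)\le W_j(w)$, which you correctly reduce to the operator identity $(\phi_j^{(k-1)})^{\uparrow}=(k-j)\phi_j^{(k)}$. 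I have checked that identity: for a compatible $k$-set exactly $k-j$ of the down-sets remain compatible with the same sign, and for a $k$-set containing some pair $\{x_l,y_l\}$ the two potentially compatible down-sets carry opposite signs and cancel; the binomial ratio $\binom{n-2j}{k-j}\big/\binom{n-2j}{k-j-1}=(n-k-j+1)/(k-j)$ then gives your factor $(n-k-j+1)/(n-k+1)\in(0,1]$ for $n\ge 2k$. What each approach buys: the paper's shortest-path trick is shorter, more elementary, and needs only that the mean-zero part changes sign; your route costs an extra combinatorial identity but yields the exact relation $W_j(\bar w)=\frac{n-k-j+1}{n-k+1}W_j(w)$, a structural fact in the spirit of the decomposition in Theorem \ref{orthogonal} that isolates precisely why the core-averaged part must be recycled through the induction (the difference operators underlying $W_1,\ldots,W_k$ annihilate it).
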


Let us first note the following.

\begin{proposition}\label{local}
For every $k\ge1$ there is $c>0$ such the following holds.
For every $n>k$, and every weighted $k$-uniform hypergraph $w$ with vertex set $[n]$ such that $w([n])=0$,
$$\E_{A,x,y}|w(A\cup\{x\})-w(A\cup\{y\})|\ge c_kn^{-k}||w||_1,$$
where the expectation is taken over $(k-1)$-sets $A$ and distinct vertices $x,y\not\in A$ chosen uniformly at random.
\end{proposition}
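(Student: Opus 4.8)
The plan is to recognise the left-hand side as an $L^1$-Poincaré (Dirichlet-type) quantity on the Johnson graph $J(n,k)$ and to compare it, in both directions, with the ``global'' average deviation $\E_e|w(e)|$, which the hypothesis $w([n])=0$ pins down. Throughout write $N=\binom nk$ and regard a $k$-set $e$ as a vertex of $J(n,k)$, two $k$-sets being adjacent when they share exactly $k-1$ elements. Every ordered adjacent pair $(g,g')$ is of the form $(A\cup\{x\},A\cup\{y\})$ for a unique $(k-1)$-set $A=g\cap g'$ and unique distinct $x=g\setminus g'$, $y=g'\setminus g$ outside $A$; hence
$$\E_{A,x,y}\bigl|w(A\cup\{x\})-w(A\cup\{y\})\bigr|=\E_{(g,g')}\bigl|w(g)-w(g')\bigr|,$$
the average being over uniformly chosen ordered edges of $J(n,k)$. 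Call this quantity $D$. Since $N\le n^k$, it suffices to prove $D\ge c'\,\|w\|_1/N$ for a constant $c'=c'(k)$.

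First I would establish the global lower bound
$$\E_{e,f}\bigl|w(e)-w(f)\bigr|\ \ge\ \E_e\bigl|w(e)\bigr|\ =\ \frac{\|w\|_1}{N},$$
where $e,f$ range independently and uniformly over $k$-sets: fixing $e$ and applying Jensen to the inner average over $f$ gives $\E_f|w(e)-w(f)|\ge|w(e)-\E_f w(f)|=|w(e)|$, because $\E_f w(f)=d(w)=0$ by hypothesis. It then remains to bound this global average from above by $D$. For this I would connect two independent $k$-sets by a short path in $J(n,k)$ and telescope: given $e,f$, list $e\setminus f$ and $f\setminus e$ (both of size $m=|e\setminus f|\le k$), pick a uniformly random matching between them and a uniformly random order of the swaps, and let $e=e_0,e_1,\dots,e_m=f$ be the resulting path, each $e_j$ obtained from $e_{j-1}$ by exchanging one element. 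Then $|w(e)-w(f)|\le\sum_{j=1}^m|w(e_{j-1})-w(e_j)|$, so taking expectations,
$$\E_{e,f}\bigl|w(e)-w(f)\bigr|\ \le\ \sum_{(g,g')}\bigl|w(g)-w(g')\bigr|\,\P\bigl(\text{the random path traverses }(g,g')\bigr).$$
The crux is to show that every ordered edge is traversed with probability at most $C_k/M$, where $M$ is the number of ordered edges of $J(n,k)$; granting this, the right-hand side is at most $C_k\,\E_{(g,g')}|w(g)-w(g')|=C_k D$, and combining the three displays yields $D\ge\|w\|_1/(C_k N)$, as required.

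The main obstacle is precisely this near-equidistribution estimate for the edges of the random path. By the built-in symmetry of the construction (uniform matching and uniform order, together with the fact that the endpoints are exchangeable uniform $k$-sets) the distribution of each step is highly symmetric, and a direct count --- fixing an edge $(g,g')=(A\cup\{x\},A\cup\{y\})$ and summing over the endpoints $e,f$ and the orderings that route the $j$-th swap through it --- should show that the probability of using $(g,g')$ at step $j$ is $\Theta_k(1/M)$ uniformly in $n>k$ (one checks, for instance, that the dominant contribution comes from $m=k$ and is comparable to $1/M=\Theta_k(n^{-(k+1)})$). Summing over the at most $k$ steps gives the bound $C_k/M$ with $C_k$ depending only on $k$, and hence a constant $c=1/C_k$ that is uniform over all $n>k$. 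Everything outside this counting step is routine.
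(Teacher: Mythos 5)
Your proposal is correct, and its architecture is the same as the paper's: both compare the local Johnson-graph quantity $D=\E_{(g,g')}|w(g)-w(g')|$ with the global average $\E_{e,f}|w(e)-w(f)|$ by telescoping along a random shortest path between two independent uniform $k$-sets, and both then exploit $w([n])=0$ to bound the global average from below. Two remarks. First, the step you single out as ``the crux'' --- that each ordered edge of $J(n,k)$ is traversed with probability $O_k(1/M)$ --- requires no counting at all: your construction (independent uniform endpoints, uniform matching between $e\setminus f$ and $f\setminus e$, uniform order of swaps) is $S_n$-equivariant, and $S_n$ acts transitively on ordered edges of $J(n,k)$, so conditioned on the path having a $j$-th step the pair $(e_{j-1},e_j)$ is \emph{exactly} uniform over ordered edges; hence the traversal probability is $\sum_{j=1}^{k}\P(m\ge j)/M\le k/M$, giving $C_k=k$. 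This symmetry observation is precisely how the paper proceeds (it notes that the triple $(A,x,y)$ read off from the first step of the path is uniformly distributed, and loses only a factor $k$ from the at most $k$ steps). Second, your lower bound $\E_{e,f}|w(e)-w(f)|\ge\E_e|w(e)|=\|w\|_1/N$ via Jensen applied to the inner average over $f$ is a genuinely different, and cleaner, treatment of the global half: the paper instead rescales to $\|w\|_1=\binom nk$, splits the edges by the sign of $w$, and argues that with probability at least $1/2$ one has $w(f)\le0$, whence $\E_{e,f}|w(e)-w(f)|\ge\tfrac12\E_e[w(e)^+]=1/4$. Your version gives a slightly better constant and avoids the case analysis; otherwise the two proofs coincide.
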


\begin{proof}
Since we can rescale, it is enough to
show that for every weighting $w$ on $[n]^{(k)}$ with $||w||_1=\binom nk$ and $w([n])=0$ we have
\begin{equation}\label{Af4k}
\E_{A,x,y}|w(A\cup\{x\})-w(A\cup\{y\})|\ge 1/4k.
\end{equation}
Note that $\E_e[w(e)^+]=\E_e[w(e)^-]$ and $\E_e|w(e)|=\E_e [w(e)^++w(e)^-]=1$,
so $\E_e[w(e)^+]=\E_e[w(e)^-]=1/2$.

Suppose that $w$ is nonnegative on $p\binom nk$ edges and negative on $(1-p)\binom nk$ edges.  We may assume $p\le1/2$ or work with $-w$.  
Pick with replacement random edges $e$ and $f$ 
and let $e_1\cdots e_i$ be a random shortest path between them
(so $e_1=e$, $e_i=f$ and each step replaces one element of the edge).  
If $e,f$ are distinct, we set $A=e_1\cap e_2$, and let $x,y$ be the remaining vertices of $e_1,e_2$ respectively; otherwise,
we choose $A,x,y$ uniformly at random.  Then $(A,x,y)$ is uniformly distributed,
and 
\begin{align}
\E_{A,x,y}|w(A\cup\{x\})-w(A\cup\{y\})|
&\ge \E_{e,f}|w(e_2)-w(e_1)| \notag \\
&\ge\E_{e,f}|w(e)-w(f)|/k. \label{Afk}
\end{align}
With probability $1-p$ we have $w(f)\le0$.  
Conditioning on this event, $\E_{e,f}[|w(e)-w(f)| \mid w(f)\le 0]\ge \E_{e,f}[w(e)^+\mid w(f)\le0]=\E_e[w(e)^+]=1/2$.
We conclude that (without conditioning)
$\E[|w(e)-w(f)|]\ge (1-p)\cdot(1/2)\ge1/4$, and \eqref{Af4k} then follows from \eqref{Afk}.
\end{proof}

\begin{proof}[Proof of Lemma \ref{thmb}]
Let us write $w=w_0+w_1$ where $w_0$ is constant and $\sum_{e}w_1(e)=0$.  

Clearly $||w||_1\le||w_0||_1+||w_1||_1$; we also have $||\phi_i||_1=2^i\binom{n-2i}{k-i}$,
since there are exactly $2^i\binom{n-2i}{k-i}$ edges compatible with any sequence $x_1,y_1,\ldots,x_i,y_i$.  Thus
$||\phi_i^*||_1=2^i$.  So, by Lemma \ref{niceform}, we have
\begin{align*}
W_i
&=\E_\pi|\langle w_\pi,\phi_i^*\rangle|\\
&=\E_\pi|\sum_ew_\pi(e)\phi_i^*(e)|\\
&\le\sum_e\E_\pi|w_\pi(e)\phi_i^*(e)|\\
&=\sum_e|\phi_i^*(e)|\cdot\E_\pi|w_\pi(e)|\\
&=\sum_e|\phi_i^*(e)|\cdot||w||_1/\binom nk\\
&=2^i||w||_1/\binom nk.
\end{align*}
Summing over $i$ gives the upper bound in \eqref{thmbeq}.

For the lower bound, note first that, by linearity and Proposition \ref{local}, for $k\ge1$,
\begin{align}
\E_{x,y}'||w^{xy}||_1
&=\E_{x,y}'\sum_{A\in (V\setminus\{x,y\})^{k-1}}|w(A\cup\{x\})-w(A\cup\{y\})|\notag\\
&=\binom{n-2}{k-1}{\E_{x,y,A}}'|w(A\cup\{x\})-w(A\cup\{y\})|\notag\\
&\ge c_k\binom{n-2}{k-1}n^{-k}||w_1||_1\notag\\
&\ge c_k'||w_1||_1/n,\label{eq10}
\end{align}
where $c$ and $c_k$ are constants depending only on $k$.

For $k=0$, we have $W_0=||w||_1$, giving \eqref{thmbeq} as required.
We now argue by induction on $k$: we will use $c$, $c'$, etc, for constants that depend only on $k$.
For $k\ge1$ and distinct $x,y\in V$, we have by induction
$$W_0^{xy}+\dots+W_{k-1}^{xy}\ge c||w^{xy}||_1/n^{k-1}.$$
Since $w_0$ is a constant function, we also have
$W_0=||w_0||_1/\binom nk\ge c''||w_0||_1/n^k$. 
It follows by \eqref{eq10} that
\begin{align*}
W_0+\cdots+W_k
&= W_0+\E_{x,y}'(W_0^{xy}+\dots+W_{k-1}^{xy})\\
&\ge c'(||w_0||_1/n^k+\E_{x,y}'||w^{xy}||_1/n^{k-1})\\
&\ge c''(||w_0||_1/n^k+||w_1||_1/n^k)\\
&\ge c'''||w||_1/n^k.
\end{align*}
\end{proof}

\section{Bounding in terms of transpositions}\label{trans}

In order to prove Theorems \ref{thdisc} and \ref{thexp}, we will need bounds both on $\disc(w,u)$ and on $\E_\pi|\langle w_\pi,u\rangle|$.  These will be driven by two results bounding these quantities from below in terms of the effects of single transpositions.

Let us fix the ground set $V$ and pick distinct vertices $x,y\in V$.  Let $\tau$ be the transposition $(xy)$.  Let $w$ and $u$ be two weightings of $V^{(k)}$, and choose uniformly at random two permutations $\pi, \sigma$.  We define
$\gamma(w,u)$ by
\begin{equation}\label{gamma}
\gamma(w,u)=\E_{\pi,\sigma}|\langle w_\pi,u_{\sigma}\rangle-\langle w_{\tau\pi}u_{\sigma}\rangle|.
\end{equation}
Thus $\gamma(w,u)$ measures the typical effect on the inner product of exchanging $x$ and $y$ in one copy of $V$.  
Note that $\gamma(w,u)$ does not depend on our choice of $x$ and $y$, 
since the expectation is taken over random permutations of the ground set for both $w$ and $u$.

Our bounds will depend on the following two lemmas.

\begin{lemma}\label{thmf}
For every $k\ge1$ there is $c>0$ such that the following holds.
For every $n\ge k$ and every pair $w,u$ of functions from $[n]^{(k)}$ to $\R$, we have 
\begin{equation}\label{thmfbound} 
\disc^+(w,u)\disc^-(w,u)\ge c^2 \gamma(w,u)^2 n^2.
\end{equation}
\end{lemma}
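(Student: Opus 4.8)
The plan is to pass from the two independent permutations to a single one, and then to split the desired factor $n^2$ into two separate gains of $n$: a second‑moment (Dirichlet) estimate and a polynomial‑rigidity (spreading) estimate. First I would reformulate $\gamma$. Writing $f(\rho)=\langle w_\rho,u\rangle$, the substitution $\rho=\sigma^{-1}\pi$, $t=\sigma^{-1}\tau\sigma$ makes $t$ a uniformly random transposition, independent of the uniform $\rho$, so that
\[
\gamma(w,u)=\E_{\rho,t}\,|f(\rho)-f(t\rho)|.
\]
Set $m=d(w)d(u)\binom nk=\E_\rho f(\rho)$, so $f-m\in[-\disc^-,\disc^+]$ with $\E(f-m)=0$. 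Taking expectations in the elementary inequality $(\disc^+-(f-m))((f-m)+\disc^-)\ge0$ gives the clean bound $\operatorname{Var}(f)\le \disc^+(w,u)\disc^-(w,u)$, where the discrepancies are those of the pair $w,u$.

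I would then reduce the lemma to the two inequalities
\[
\text{(A)}\quad \operatorname{Var}(f)\ge c\,n\,\gamma^2,\qquad\qquad \text{(B)}\quad \disc^+(w,u)\disc^-(w,u)\ge c\,n\,\operatorname{Var}(f),
\]
since chaining them yields $\disc^+\disc^-\ge c^2n^2\gamma^2$. Inequality (A) I expect to come from a direct second‑moment computation: by Cauchy--Schwarz $\gamma^2\le \E_{\rho,t}(f(\rho)-f(t\rho))^2$, and since $f(\rho)-f(t\rho)$ only involves the $O(n^{k-1})$ edges meeting the swapped pair, expanding the square gives $\E_{\rho,t}(f(\rho)-f(t\rho))^2\le \frac{C_k}{n}\operatorname{Var}(f)$. (For $k=1$ a short calculation gives exactly $\E_{\rho,t}(f(\rho)-f(t\rho))^2=\frac{4}{n-1}\operatorname{Var}(f)$, so (A) holds with room to spare.) This is precisely the statement that a degree-$\le k$ function of a random permutation has small transposition‑Dirichlet energy relative to its variance.

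Inequality (B) is the main obstacle, and is the pair‑analogue of the single‑hypergraph bound \eqref{bsresult}. A variance bound alone cannot give it: the extremes of $f$ can exceed its standard deviation by a factor of order $\sqrt n$, so chaining (A) with the trivial $\operatorname{Var}(f)\le\disc^+\disc^-$ loses a whole factor $n$. The extra factor must come from the low‑degree rigidity of $f$. The plan is to choose a nested one‑parameter family of placements indexed by $t\in\{0,\dots,n\}$ so that the conditional expectation of the overlap traces a polynomial of degree $\le k$ in $t$ (the Erd\H os--Spencer counting mechanism, in which overlapping on a $t$-set contributes $\binom{\cdot}{k}$-type terms), with $\operatorname{Var}(f)$ governing the size of its non-constant part. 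Applying Lemma \ref{poly} to this normalised polynomial forces it to be bounded below in $L^1$ across the full range $[0,n]$, converting the per‑step oscillation into a global spread and producing the missing factor of $n$ in the product. Proposition \ref{simpleprop} would absorb the unknown mean $m$ (passing from deviation about a conditional mean to genuine absolute deviation), and Lemma \ref{sumn} would lower-bound the random-subset sums that arise.

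I expect all the delicate points to lie in (B): building the family so that the traced expectation is \emph{exactly} a degree-$\le k$ polynomial, verifying that the coefficient to which Lemma \ref{poly} applies is the one controlled by $\operatorname{Var}(f)$ (equivalently by $\gamma$), and—most importantly—arranging that the resulting $L^1$ lower bound feeds into the \emph{product} of the two one-sided discrepancies with the correct power of $n$, rather than into their sum or into the variance alone. A one-sided path argument from the maximiser is tempting but fails here, precisely because $\gamma$ is an average over \emph{uniform} placements while the extremes are attained at highly atypical ones; the $K_{n/2,n/2}$, $2K_{n/2}$ pair, where one side of the discrepancy dwarfs $n\gamma$, shows that only the product can be controlled.
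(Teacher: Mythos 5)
Your reduction to the two inequalities (A) and (B) breaks down at (B): the inequality $\disc^+(w,u)\disc^-(w,u)\ge c\,n\,\operatorname{Var}(f)$ is false. Take $k=1$ and $w=u$ with $w(1)=n-1$ and $w(j)=-1$ for $j\ge 2$, so that $w([n])=0$. Then $f(\rho)=\sum_x w(x)u(\rho(x))=-n+n^2\,\mathbf 1[\rho(1)=1]$, so $f$ equals $n(n-1)$ with probability $1/n$ and $-n$ otherwise. Hence $\E f=0$, $\disc^+(w,u)=n(n-1)$, $\disc^-(w,u)=n$, and $\operatorname{Var}(f)=n^2(n-1)=\disc^+(w,u)\disc^-(w,u)$ exactly. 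Thus your trivial upper bound $\operatorname{Var}(f)\le\disc^+\disc^-$ is already tight here, and there is no extra factor of $n$ to be extracted: (B) would require $n^2(n-1)\ge c\,n^3(n-1)$. The underlying reason is that for low-degree $f$ the variance can be carried entirely by a rare event of probability $\sim 1/n$ on which $f$ is enormous, and no amount of polynomial rigidity rules this out. (The lemma itself is safe in this example because $\gamma(w,u)=4$, so $c\gamma^2n^2\ll n^2(n-1)$; it is only your intermediate quantity $\operatorname{Var}(f)$ that sits in the wrong place between $\gamma^2 n$ and the product of discrepancies.) Inequality (A) is correct -- for $k=1$ your identity $\E_{\rho,t}(f(\rho)-f(t\rho))^2=\tfrac{4}{n-1}\operatorname{Var}(f)$ checks out, and the general case follows from the spectral behaviour of the transposition walk on degree-$\le k$ functions -- but "expanding the square" is not yet a proof for $k\ge 2$, and in any case (A) alone cannot close the argument.

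The paper never passes through the variance. It splits into cases according to which of $\disc^+$, $\disc^-$ is smaller, writes the smaller one as $\gamma(w,u)n/10\alpha$, and then must show the other is at least $c\alpha\gamma(w,u)n$. For this it fixes $\lfloor n/2\rfloor$ disjoint transpositions, chooses (by averaging) a starting placement for which $\alpha\langle w,u\rangle+\Delta(I^+)$ is large, and applies a $p$-random subset of the "positive" transpositions: by Lemma \ref{coeff} the expected change is a polynomial in $p$ of degree $\le k$ with linear coefficient $\delta(I^+)$. The cap imposed by the small one-sided discrepancy at $p=1/\alpha$ forces some higher coefficient $|A_i|$ to be of order $\alpha\gamma n$, and Lemma \ref{poly} then produces a $p$ at which the change is of order $\alpha\gamma n$ in the negative direction. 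So your instinct that a one-parameter family traced by a degree-$\le k$ polynomial plus Lemma \ref{poly} is the engine is right, but the polynomial must be driven by $\gamma$ (via $\Delta(I^+)$) and anchored to the placement achieving the small one-sided discrepancy -- not routed through $\operatorname{Var}(f)$.
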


\begin{lemma}\label{thme}
For every $k\ge1$ there is $c>0$ such that the following holds.
For every $n\ge k$ and every pair $w,u$ of functions from $[n]^{(k)}$ to $\R$
$$\E_\pi|\langle w_\pi,u\rangle| \ge c \gamma(w,u) \sqrt n.$$
\end{lemma}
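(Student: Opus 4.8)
The plan is to express $\E_\pi|\langle w_\pi,u\rangle|$ as the expected size of a bounded-degree polynomial on a discrete cube whose coordinates index $\approx n/2$ disjoint transpositions; the factor $\sqrt n$ will then come from an anti-concentration estimate for such polynomials (Lemma \ref{sumn} being the degree-one case). Write $f(\pi)=\langle w_\pi,u\rangle$. Since the inner product is invariant under simultaneous relabelling, $\langle w_\pi,u_\sigma\rangle=\langle w_{\sigma^{-1}\pi},u\rangle$, and a short calculation rewrites \eqref{gamma} as $\gamma(w,u)=\E_{\rho,\tau}|f(\rho)-f(\tau\rho)|$, where $\rho$ is a uniform permutation and $\tau$ an independent, uniformly random transposition. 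First I would reduce to the mean-zero case: writing $w=w'+d(w)\mathbf1$ and $u=u'+d(u)\mathbf1$ with $w',u'$ of total weight $0$, the cross terms vanish and $f(\pi)=\langle w'_\pi,u'\rangle+d(w)d(u)\binom nk$, so $f$ differs from $f':=\langle w'_\pi,u'\rangle$ by the constant $\E_\pi f$, while $\gamma$ is unchanged since it sees only differences of inner products. By Proposition \ref{simpleprop}, $\E_\pi|f|\ge\tfrac12\E_\pi|f'|$, so I may assume that $w,u$ have total weight $0$, in which case $\E_\pi f=0$.

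Next, fix a uniformly random partition of $[n]$ into $m=\lfloor n/2\rfloor$ pairs $\{a_i,b_i\}$, put $\tau_i=(a_i\,b_i)$, and for $\epsilon\in\{0,1\}^m$ set $\theta_\epsilon=\prod_{i:\epsilon_i=1}\tau_i$. Because $\theta_\epsilon$ is a fixed permutation, $\theta_\epsilon\pi$ is uniform for every $\epsilon$, so $\E_\pi|f|=\E_{\pi,\epsilon}|f(\theta_\epsilon\pi)|$. The crucial point is that, for fixed matching and $\pi$, the function $F(\epsilon):=f(\theta_\epsilon\pi)$ is a multilinear polynomial of degree at most $k$ in $\epsilon$: the permutation $\theta_\epsilon$ moves a given $k$-edge only through the at most $k$ pairs that the edge meets in exactly one vertex, so each summand $w_\pi(\theta_\epsilon e)$ depends on at most $k$ coordinates of $\epsilon$. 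Moreover flipping the $i$th coordinate is exactly composition with $\tau_i$, so the discrete derivative $D_iF(\epsilon)=\tfrac12\big(F(\epsilon)-F(\epsilon^{\oplus i})\big)$ (with $\epsilon^{\oplus i}$ the flip of coordinate $i$) satisfies $2|D_iF(\epsilon)|=|f(\theta_\epsilon\pi)-f(\tau_i\theta_\epsilon\pi)|$. Averaging over $\epsilon,\pi,i$ and the matching, $\theta_\epsilon\pi$ is uniform and independent of the uniformly random transposition $\tau_i$, so $\E\,[2|D_iF|]=\gamma(w,u)$.

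It then remains to bound $\E_\epsilon|F|$ from below per instance. Put $g=F-\E_\epsilon F$, a mean-zero multilinear polynomial of degree $\le k$; by Proposition \ref{simpleprop}, $\E_\epsilon|F|\ge\tfrac12\E_\epsilon|g|=\tfrac12\|g\|_1$. I would bound $\|g\|_1$ in two moves. A Poincaré-type identity (the Fourier–Walsh expansion on $\{\pm1\}^m$) gives $\sum_i\|D_iF\|_2^2=\sum_S|S|\hat g(S)^2\le k\,\mathrm{Var}(F)$, whence, using $\|\cdot\|_2\ge\|\cdot\|_1$ and Jensen, $\mathrm{Var}(F)\ge\frac mk\big(\E_i\|D_iF\|_1\big)^2$. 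A reverse Hölder inequality for degree-$\le k$ functions, $\|g\|_1\ge c_k\|g\|_2=c_k\sqrt{\mathrm{Var}(F)}$ (a standard consequence of hypercontractivity, whose degree-one case is essentially Lemma \ref{sumn}), converts this into the $L^1$ bound $\E_\epsilon|F|\ge\tfrac12 c_k\sqrt{m/k}\,\E_i\|D_iF\|_1$. Averaging over $\pi$ and the matching and using that $\E_i\|D_iF\|_1$ averages to $\tfrac12\gamma$ gives $\E_\pi|f|\ge c'\gamma\sqrt n$, as required.

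The step I expect to be the main obstacle is the reverse Hölder estimate $\|g\|_1\ge c_k\|g\|_2$ for $k\ge2$. This is exactly where the cross-terms — edges meeting two or more of the matched pairs — destroy the additivity that makes the $k=1$ case a clean subset sum, and where one must exploit that $F$ has bounded degree; it is a multivariate analogue of the one-dimensional anti-concentration in Lemma \ref{poly}. The remaining ingredients — the reduction to mean zero, the uniformity of $\theta_\epsilon\pi$, and the identification of coordinate derivatives with transpositions — are routine.
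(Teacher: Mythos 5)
Your proof is correct, but it takes a genuinely different route from the paper's. The paper uses the same family of $\lfloor n/2\rfloor$ disjoint transpositions but collapses everything to a \emph{single-variable} polynomial: it picks a uniform random $J_0\subset I$ (so that Lemma \ref{sumn} makes the linear coefficient $\delta(J_0)$ of size $\Delta(I)/\sqrt{8|I|}$ on average), then $p$-thins $J_0$ with $p$ uniform in $[0,1]$, applies Jensen and Lemma \ref{coeff} to see that the conditional expectation of the displacement is a degree-$\le k$ polynomial in $p$ with linear coefficient $\delta(J_0)$, invokes the elementary compactness bound of Lemma \ref{poly}, and finishes with the triangle inequality $\E|\langle w_\rho,u\rangle-\langle w_{\tau^{J_1}\rho},u\rangle|\le 2\E_\pi|\langle w_\pi,u\rangle|$. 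You instead keep the full multilinear polynomial $F(\epsilon)$ of degree at most $k$ on the cube and lower-bound $\E_\epsilon|F|$ directly via the Fourier identity $\sum_i\|D_iF\|_2^2=\sum_S|S|\hat F(S)^2\le k\,\mathrm{Var}(F)$ together with the reverse H\"older inequality $\|g\|_1\ge c_k\|g\|_2$ for low-degree functions. Your steps all check out: the rewriting of $\gamma$ as $\E_{\rho,\tau}|f(\rho)-f(\tau\rho)|$ over a uniform transposition $\tau$ independent of $\rho$ is valid (conjugating the fixed $(xy)$ by $\sigma$), the identification of discrete derivatives with transpositions is exact, and the one ingredient you flag as the obstacle --- $\|g\|_1\ge 3^{-k}\|g\|_2$ for multilinear $g$ of degree at most $k$ --- is a genuine theorem, following from Bonami's hypercontractive inequality $\|g\|_4\le 3^{k/2}\|g\|_2$ via $\|g\|_2^2\le\|g\|_1^{2/3}\|g\|_4^{4/3}$. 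The trade-off is that you import hypercontractivity, a strictly stronger tool than anything in the paper, whose Lemma \ref{poly} is mere one-variable compactness; in exchange your route avoids the two-stage randomization and the closing triangle inequality, and yields the stronger intermediate bound $\E_\epsilon|F|\ge c_k\sqrt{\mathrm{Var}(F)}$. Indeed, the paper's Lemma \ref{coeff} is exactly the evaluation of your multilinear $F$ under the $p$-biased product measure on the coordinates in $J_0$, so the published proof can be read as a derandomization of yours to a single variable. (A minor remark: your opening reduction to $w([n])=u([n])=0$ is redundant, since the later application of Proposition \ref{simpleprop} to $F-\E_\epsilon F$ already absorbs any constant; it is harmless.)
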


We start by setting up a framework for the proofs of Lemma \ref{thmf} and Lemma \ref{thme}.

Let $w,u$ be two weightings of $[n]^{(k)}$.
Let $I$ be an index set, and suppose we have transpositions 
\begin{equation*}\label{transpi}
\tau^i=(x_iy_i), \qquad i\in I
\end{equation*}
such that the pairs $\{x_i,y_i\}_{i\in I}$ are disjoint.
For $J\subset I$, we define $\tau^J$ to be the product of the transpositions $\{\tau^j:j\in J\}$ 
(note that the $\tau^j$ commute, and $\tau^J=(\tau^J)^{-1}$).  
We will want to consider the difference $|\langle w_{\tau^J},u\rangle -\langle w,u\rangle|$ for various sets $J$.   For $i\in I$, we define
$$\delta(i) =\langle w,u\rangle -\langle w_{\tau^i},u\rangle.$$
For $J\subset I$, we define
$$\delta(J)=\sum_{i\in J}\delta(i)$$
and
\begin{equation}\label{Delta}
\Delta(J)=\sum_{i\in J}|\delta(i)|. 
\end{equation}
If we want to specify $w,u$ explicitly, we will write $\Delta_{w,u}$ instead of $\Delta$, and so on.  However, we
drop indices when they are not necessary.

For a set $e\in V^{(k)}$, let
$$\tr(e)=\{i\in I:|e\cap\{x_i,y_i\}|=1\}.$$
Note that $i\in\tr(e)$ if and only if $\tau^i(e)\ne e$,  
and $\tr(\tau^J(e))=\tr(e)$ for any $J$. 
We decompose $\langle w,u\rangle-\langle w_{\tau^J},u\rangle$ as follows.

\begin{proposition}\label{decomp}
Let $n\ge k\ge1$, let $w,u$ be weightings on $[n]^{(k)}$, 
and let $I$ be an index set for transpositions $\tau^i$ as in \eqref{transpi}.
For every $J\subset I$, we have
$$\langle w,u\rangle-\langle w_{\tau^J},u\rangle=\frac12\sum_{e\in [n]^{(k)}}(w(e)-w(\tau^Je))(u(e)-u(\tau^Je)).$$
In particular, if $\tau=(xy)$, 
\begin{equation}\label{wwt}
\langle w,u\rangle-\langle w_\tau,u\rangle=\langle w^{xy},u^{xy}\rangle.
\end{equation}
\end{proposition}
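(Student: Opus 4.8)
The plan is to prove the identity directly by expanding the inner products and grouping the edges of $[n]^{(k)}$ according to how they interact with the support $\{x_i,y_i\}_{i\in J}$ of the permutation $\tau^J$. First I would write
$$\langle w,u\rangle-\langle w_{\tau^J},u\rangle=\sum_{e\in[n]^{(k)}}\bigl(w(e)-w_{\tau^J}(e)\bigr)u(e)=\sum_{e}\bigl(w(e)-w(\tau^Je)\bigr)u(e),$$
using $w_{\tau^J}(e)=w((\tau^J)^{-1}e)=w(\tau^Je)$ since $\tau^J$ is an involution. The key observation is that $\tau^J$ is a fixed-point-free involution on the edge set \emph{restricted to} those $e$ it actually moves: namely $e$ and $\tau^Je$ are paired, and $\tau^J(\tau^Je)=e$. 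So the single unsymmetric sum $\sum_e(w(e)-w(\tau^Je))u(e)$ should be symmetrized by pairing each moved edge $e$ with its partner $\tau^Je$.

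The central step is therefore the symmetrization. I would rewrite the sum by replacing $e$ with $\tau^Je$ (a bijection of the index set of the sum, since $\tau^J$ permutes $[n]^{(k)}$), giving
$$\sum_e\bigl(w(e)-w(\tau^Je)\bigr)u(e)=\sum_e\bigl(w(\tau^Je)-w(e)\bigr)u(\tau^Je).$$
Averaging this with the original expression yields
$$\langle w,u\rangle-\langle w_{\tau^J},u\rangle=\frac12\sum_e\bigl(w(e)-w(\tau^Je)\bigr)\bigl(u(e)-u(\tau^Je)\bigr),$$
which is exactly the claimed formula. Note that every term with $\tau^Je=e$ (i.e.\ $e$ fixed by $\tau^J$) contributes zero on both sides, so the equality holds term by term and no care about the support is actually needed beyond the involution property $\tau^J=(\tau^J)^{-1}$.

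For the special case \eqref{wwt} with $J$ a singleton so that $\tau=(xy)$, I would specialize the formula: the only edges $e$ with $\tau e\ne e$ are those meeting $\{x,y\}$ in exactly one vertex, and these come in pairs $\{A\cup\{x\},A\cup\{y\}\}$ with $A\in(V\setminus\{x,y\})^{(k-1)}$. For such a pair, $w(e)-w(\tau e)=\pm\bigl(w(A\cup\{x\})-w(A\cup\{y\})\bigr)=\pm w^{xy}(A)$, and likewise for $u$, so the product $(w(e)-w(\tau e))(u(e)-u(\tau e))$ equals $w^{xy}(A)u^{xy}(A)$ and is the same for both edges in the pair. Hence the factor $\tfrac12$ cancels the double-counting over the two edges of each pair, leaving $\sum_{A}w^{xy}(A)u^{xy}(A)=\langle w^{xy},u^{xy}\rangle$, as required. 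I do not anticipate a genuine obstacle here; the only point needing mild care is confirming that the reindexing $e\mapsto\tau^Je$ is a genuine bijection of $[n]^{(k)}$ (immediate, as $\tau^J$ is a permutation of $[n]$) so that the symmetrization step is justified.
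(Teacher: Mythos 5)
Your proof is correct and follows essentially the same route as the paper: both arguments exploit that $\tau^J$ is an involution permuting $[n]^{(k)}$, symmetrize the sum $\sum_e(w(e)-w(\tau^Je))u(e)$ by reindexing $e\mapsto\tau^Je$ and averaging, and then derive \eqref{wwt} by pairing the edges $A\cup\{x\}$, $A\cup\{y\}$ so that the factor $\tfrac12$ cancels the double count. No gaps.
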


\begin{proof}
Note that, for any permutation $\pi$ and any $f:V^{(k)}\to\R$,
we have $\sum_{e\in [n]^{(k)}}f(e)=\sum_{e\in [n]^{(k)}}f(\pi e)$.  So
\begin{align*}
\langle w,u\rangle-\langle w_{\tau^J},u\rangle
&=\sum_e(w(e)u(e)-w_{\tau^J}(e)u(e))\\
&=\frac12\sum_e(w(e)u(e)-w_{\tau^J}(e)u(e))\\
&\qquad+\frac12\sum_e(w(\tau^Je)u(\tau^Je)-w_{\tau^J}(\tau^Je)u(\tau^Je))\\
&=\frac12\sum_e(w(e)u(e)-w(\tau^Je)u(e))\\
&\qquad+\frac12\sum_e(w(\tau^Je)u(\tau^Je)-w(e)u(\tau^Je))\\
&=\frac12\sum_e(w(e)-w(\tau^Je))(u(e)-u(\tau^Je)),
\end{align*}
where all sums are over $[n]^{(k)}$.

To prove \eqref{wwt}, note that $w(e)-w(\tau e)=0$ unless $|e\cap\{x,y\}|=1$.  So, writing $W=[n]\setminus \{x,y\}$, 
\begin{align*}
\frac12 &\sum_{e\in[n]^{(k)}}(w(e)-w(\tau e))(u(e)-u(\tau e))\\
&=\frac12\sum_{f\in W^{(k-1)},v\in\{x,y\}} 
   (w(f\cup\{v\})-w(f\cup\{\tau v\}))(u(f\cup\{v\})-u(f\cup\{\tau v\}))\\
&=\sum_{f\in W^{(k-1)}} 
   (w(f\cup\{x\})-w(f\cup\{y\}))(u(f\cup\{x\})-u(f\cup\{y\}))\\
&=\sum_{f\in W^{(k-1)}} w^{xy}(f)u^{xy}(f)\\
&=\langle w^{xy},u^{xy}\rangle.
\end{align*}
\end{proof}

We also consider the expected effect of a randomly chosen set of transpositions.

\begin{lemma}\label{coeff}
Let $n\ge k\ge1$, let $w,u$ be weightings on $[n]^{(k)}$, 
and let $I$ be an index set for transpositions $\tau^i$ as in \eqref{transpi}.
Let $I$ be fixed, and let $J$ be a random subset of $I$, where each $i\in I$ is taken independently with probability $p$.  Then
$\E(\langle w,u\rangle-\langle w_{\tau^J},u\rangle)$ can be written as a polynomial in $p$ of the form
\begin{equation}\label{polycoeff}
\delta(I)p+\sum_{i=2}^kA_ip^i,
\end{equation}
for some real numbers $A_2,\dots,A_k$.
\end{lemma}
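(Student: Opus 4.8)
The plan is to start from the decomposition in Proposition~\ref{decomp}, write
\[
g(J):=\langle w,u\rangle-\langle w_{\tau^J},u\rangle
=\frac12\sum_{e\in[n]^{(k)}}\bigl(w(e)-w(\tau^J e)\bigr)\bigl(u(e)-u(\tau^J e)\bigr),
\]
and then take the expectation $P(p):=\E_J\,g(J)$ term by term over the random set $J$. The one observation that does all the work is a \emph{localization}: $\tau^J$ moves an edge $e$ only through the indices $i\in J$ with $|e\cap\{x_i,y_i\}|=1$, so $\tau^J e=\tau^{J\cap\tr(e)}e$, i.e.\ the $e$-summand depends on $J$ only through $J\cap\tr(e)$. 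Moreover, since the supports $\{x_i,y_i\}$ are disjoint, each $i\in\tr(e)$ singles out a distinct vertex of $e$, whence $|\tr(e)|\le|e|=k$.

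With this in hand, fix an edge $e$, set $T=\tr(e)$, and note that $K:=J\cap T$ is a subset of $T$ in which each index is kept independently with probability $p$. The contribution of $e$ to $P(p)$ is therefore
\[
\frac12\sum_{K\subseteq T}p^{|K|}(1-p)^{|T|-|K|}\bigl(w(e)-w(\tau^K e)\bigr)\bigl(u(e)-u(\tau^K e)\bigr),
\]
which is a polynomial in $p$ of degree at most $|T|\le k$. Summing over all edges $e$ shows that $P(p)$ is a polynomial in $p$ of degree at most $k$, which already gives the overall shape in \eqref{polycoeff}; it then remains only to identify the coefficients of $p^0$ and $p^1$.

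For the coefficients I would argue directly from $P(p)=\sum_{J\subseteq I}p^{|J|}(1-p)^{|I|-|J|}g(J)$. The constant term vanishes because setting $p=0$ forces $J=\emptyset$, so $\tau^J$ is the identity and $g(\emptyset)=0$ (equivalently, only $K=\emptyset$ survives in each $e$-sum, and $(w(e)-w(e))(u(e)-u(e))=0$). For the linear term, every $J$ with $|J|\ge2$ contributes a multiple of $p^2$, while the $J=\emptyset$ term is killed by $g(\emptyset)=0$; hence the coefficient of $p$ comes only from the singletons $J=\{i\}$, each contributing $p(1-p)^{|I|-1}g(\{i\})=\bigl(p+O(p^2)\bigr)g(\{i\})$. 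Since $g(\{i\})=\langle w,u\rangle-\langle w_{\tau^i},u\rangle=\delta(i)$, summing gives coefficient $\sum_{i\in I}\delta(i)=\delta(I)$, as required.

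I expect no serious obstacle here: the genuine content is the degree bound, and the only point one must get exactly right is the localization $\tau^J e=\tau^{J\cap\tr(e)}e$ together with $|\tr(e)|\le k$, both of which follow from the disjointness of the transposition supports. Everything else is a routine binomial bookkeeping consequence of Proposition~\ref{decomp}.
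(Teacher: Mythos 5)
Your proof is correct and follows essentially the same route as the paper: Proposition~\ref{decomp}, the localization $\tau^J e=\tau^{J\cap\tr(e)}e$ with $|\tr(e)|\le k$ to get the degree bound, vanishing at $J=\emptyset$ for the constant term, and the singleton contributions for the coefficient of $p$. The only cosmetic difference is that you make the bound $|\tr(e)|\le k$ explicit where the paper leaves it implicit.
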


\begin{proof}
For $e\in[n]^{(k)}$, let
$$\mu_J(e)=(w(e)-w(\tau^Je))(u(e)-u(\tau^Je)).$$
By Proposition \ref{decomp} we have
\begin{equation}\label{12sum}
\langle w,u\rangle-\langle w_{\tau^J},u\rangle
=\frac12\sum_e\mu_J(e).
\end{equation}
For a given edge $e$, the value of $\mu_J(e)$ depends only on $J\cap\tr(e)$, so
$$\mu_J(e)=\mu_{J\cap \tr(e)}(e).$$

It follows that
$$\E\mu_J(e)=\sum_{A\subset \tr(e)}p^{|A|}(1-p)^{|\tr(e)|-|A|}\mu_A(e),$$
and hence, by \eqref{12sum}, $\E(\langle w,u\rangle-\langle w_{\tau^J},u\rangle)$ is a polynomial in $p$ with degree at most $k$.
Since $\mu_\emptyset(e)=0$, the constant term is 0.

Now consider the behaviour of $\langle w,u\rangle-\langle w_{\tau^J},u\rangle$ as $p\to0$.  For each $i\in J$ we have $\P(J={i})=p+O(p^2)$.  As $\P(|J|>1)=O(p^2)$, it follows that 
$$\E(\langle w,u\rangle-\langle w_{\tau^J},u\rangle)=p\sum_i\delta(i)+O(p^2),$$
and so the coefficient of $p$ is $\sum_{i\in I}\delta(i)=\delta(I)$.
\end{proof}

We now prove the two lemmas stated at the beginning of the section.

\begin{proof}[Proof of Lemma \ref{thmf}]
Adding a constant to $w$ or $u$ does not affect $\disc^+(w,u)$, $\disc^-(w,u)$ or $\gamma(w,u)$,
so we may assume that $w([n])=u([n])=0$. 
Note first that $\E_\pi\langle w_\pi,u\rangle=0$ and so
$\E_\pi[\langle w_\pi,u\rangle^+]=\E_\pi[\langle w_\pi,u\rangle^-]=\E_\pi|\langle w_\pi,u\rangle|/2$.  

For fixed $n$, we can argue as follows. From \eqref{gamma} we have 
\begin{align*}
\gamma(w,u)
&=\E_{\pi,\sigma}|\langle w_\pi,u_\sigma\rangle-\langle w_{\tau\pi},u_\sigma\rangle|\\
&\le \E_{\pi,\sigma}(|\langle w_\pi,u_\sigma\rangle|+|\langle w_{\tau\pi},u_\sigma\rangle|)\\
&=2\E_\pi|\langle w_\pi,u\rangle|,
\end{align*}
and so we have
\begin{align*}
\min \{\disc^+(w,u),\disc^-(w,u)\}
&\ge \min\{\E_\pi[\langle w_\pi,u\rangle^+],\E_\pi[\langle w_\pi,u\rangle^-]\}\\ 
&\ge \gamma(w,u)/4 =\frac12\E_\pi|\langle w_\pi,u\rangle|.
\end{align*}
It follows that \eqref{thmfbound} holds for any fixed constant $n$ (and appropriate $c>0$), and so 
we may assume that $n>100k$.

Let $K\ge2$ be a fixed constant (which we will specify later), and
suppose that $\disc^+(w,u)\le\disc^-(w,u)$.  
If $\disc^+(w,u)\ge\gamma(w,u)n/10K$, we are done (with $c=1/10K$), so we may assume that
\begin{equation}\label{upper1}
\disc^+(w,u)=\frac{\gamma(w,u)n}{10\alpha}
\end{equation}
for some $\alpha\ge K$.  We shall show that, for some (small) constant $c>0$, we have
$$\disc^-(w,u)\ge c\gamma(w,u)\alpha n.$$

Let $t=\lfloor n/2\rfloor$ and let $x_1,\ldots,x_t,y_1,\ldots,y_t$ be a sequence of distinct vertices of $V$.  
For $I=\{1,\ldots,t\}$ and each $i\in I$, let $\tau^i=(x_iy_i)$. 
Let $\pi$ and $\sigma$ be chosen independently and uniformly at random from $S_n$.
Then \eqref{gamma} and linearity of expectation imply that 
$$\E\Delta_{w_\pi,u_\sigma}(I)=t\gamma(w,u).$$
Let $I^+=\{i:\delta_{w_\pi,u_\sigma}(i)>0\}$ and $I^-=\{i:\delta_{w_\pi,u_\sigma}(i)\le0\}$,
so $\delta_{w_\pi,u_\sigma}(I)=\delta_{w_\pi,u_\sigma}(I^+)+\delta_{w_\pi,u_\sigma}(I^-)
=\Delta_{w_\pi,u_\sigma}(I^+)-\Delta_{w_\pi,u_\sigma}(I^-)$.  Since $\E\delta_{w_\pi,u_\sigma}(I)=0$ we have
$$\E\Delta_{w_\pi,u_\sigma}(I^+)=\E\Delta_{w_\pi,u_\sigma}(I^-)=\frac t2\gamma(w,u).$$
We also have $\E\langle w_\pi,u_\sigma\rangle=0$, and so 
$$\E_{\pi,\sigma}[\alpha\langle w_\pi,u_\sigma\rangle+\Delta(I^+)]=\frac{t}{2}\gamma(w,u).$$
We can therefore choose $\pi$, $\sigma$ such that $\alpha\langle w_\pi,u_\sigma\rangle+\Delta_{w_\pi,u_\sigma}(I^+)\ge t\gamma(w,u)/2$.
Replacing $w$ and $u$ by $w_\pi, u_\sigma$, we may therefore assume that
\begin{equation}\label{choice}
\alpha\langle w,u\rangle+\Delta_{w,u}(I^+)\ge\frac t2\gamma(w,u).
\end{equation}
Note that this replacement does not change the value of $\disc^+$ or of $\disc^-$.

Now consider the effects of applying $\tau^J$, where $J\subset I^+$ is a random subset of $I^+$ 
with each $i\in I^+$ is present independently with probability $p$.  
Lemma \ref{coeff} tells us that
\begin{equation}\label{wdiff}
\E[\langle w_{\tau^J},u\rangle-\langle w,u\rangle]=p\delta(I^+)+\sum_{i=2}^kA_ip^i
\end{equation}
for some $A_2,\ldots,A_k$.  It follows from \eqref{choice},
by considering the case $p=1/\alpha$, that we have
\begin{align}
\E\langle w_{\tau^J},u\rangle
&=\langle w,u\rangle + \E[\langle w_{\tau^J},u\rangle-\langle w,u\rangle]\notag\\
&=\langle w,u\rangle + \Delta(I^+)/\alpha + \sum_{i=2}^kA_i/\alpha^i\notag\\
&\ge\frac{t}{2\alpha}\gamma(w,u)+\sum_{i=2}^k{A_i}/{\alpha^i}. \label{wtju1}
\end{align}
Now \eqref{upper1} implies that $\langle w_{\tau^J},u\rangle\le \gamma(w,u)n/10\alpha$ for any choice of $J$, so \eqref{wtju1} implies that
\begin{align*}
\sum_{i=2}^k A_i/\alpha^i
&\le \gamma(w,u)n/10\alpha-t\gamma(w,u)/2\alpha\\
&<-\gamma(w,u)n/10\alpha.
\end{align*}
Since $\alpha\ge2$, we must have $|A_i|\ge \alpha\gamma(w,u)n/20$ for some $i\ge2$. 

It follows by \eqref{poly1} that
for some $p\in[0,1]$ and some $c_k>0$ that depends only on $k$,
we have
\begin{equation}\label{escape}
|p\delta(I^+)+\sum_{i=2}^kA_ip^i|\ge 2c_k\alpha\gamma(w,u)n.
\end{equation}
But now, choosing $p$ such that \eqref{escape} holds, 
we have by \eqref{wdiff}
\begin{equation}\label{diff1}
\E|\langle w,u\rangle-\langle w_{\tau^J},u\rangle|\ge 2c_k\alpha\gamma(w,u)n
\end{equation}
and, since \eqref{upper1} holds, if we have chosen $K>1/\sqrt{c_k}$, we must have,
by \eqref{upper1} and \eqref{diff1},
\begin{align*}
\E\min(\langle w,u\rangle,\langle w_{\tau^J},u\rangle)
&=\E[\max(\langle w,u\rangle,\langle w_{\tau^J},u\rangle)-|\langle w,u\rangle-\langle w_{\tau^J},u\rangle|]\\
&\le \frac{\gamma(w,u)n}{10\alpha}-2c_k\alpha\gamma(w,u)n\\
&= \gamma(w,u)n\alpha\cdot[1/10\alpha^2-2c_k]\\
&\le -c_k\alpha\gamma(w,u)n.
\end{align*}
In particular, there is some $J$ such that
$$\langle w_{\tau^J},u\rangle \le -c_k\alpha\gamma(w,u)n$$
and so $\disc^-(w,u)\ge c_k\alpha\gamma(w,u)n$, as claimed.
\end{proof}

\begin{proof}[Proof of Lemma \ref{thme}]
We would like to argue as in the proof of Lemma \ref{thmf}.  However, there is an important difference: 
in the previous proof we could replace $w$ and $u$ by our choice of $w_\pi$ and $u_\sigma$, 
and then choose an advantageous set of transpositions to apply; 
now we must select our permutations $\pi, \sigma$ and transpositions so that the resulting permutations are uniformly distributed.

Consider first a specific choice of $\pi$ and $\sigma$, 
and let $t$, $I$ and the transpositions $\tau^i$ be defined as in the proof of Lemma \ref{thmf}.  
Recall that the set $I$ and the transpositions $\tau^i$ are fixed with respect to the ground set. 
However, $\delta(I)$ might be close to 0, which is not helpful if we want to use \eqref{polycoeff}.  
We therefore generate random sets $J_0$ and $J_1$ of transpositions in two steps as follows:
\begin{enumerate}
\item Let $J_0$ be a random subset of $I$, chosen uniformly at random from all $2^{|I|}$ subsets.
\item Let $p\in[0,1]$ be chosen uniformly at random, and let $J_1\subset J_0$ be a random subset, 
where each $i\in J_0$ is taken independently with probability $p$.
\end{enumerate}

Consider first $J_0$.  It follows from Lemma \ref{sumn} that
\begin{equation}\label{part1}
\E_{J_0}|\delta(J_0)|\ge\frac{\Delta(I)}{\sqrt{8|I|}}.
\end{equation}
Now if we condition on $J_0$ and $p$, then by Lemma \ref{coeff} we have
$$\E_{J_1}[\langle w,u\rangle-\langle w_{\tau^{J_1}},u\rangle \mid J_0,p]=\delta(J_0)p+\sum_{i=2}^kA_ip^i,$$
for some $A_2,\ldots,A_k$ that depend on $J_0$.  It then follows from Lemma \ref{poly} 
and the tower law for expectation that
there is a constant $c_k>0$ such that, if we condition just on $J_0$, we have
$$\E_{p,J_1}[|\langle w,u\rangle-\langle w_{\tau^{J_1}},u\rangle| \mid J_0]
=\E_p[|\delta(J_0)p+\sum_{i=2}^kA_ip^i|]
\ge c_k|\delta(J_0)|.$$
But now by \eqref{part1} and (again) the tower law for expectation it follows that
$$\E_{J_0,p,J_1}[|\langle w,u\rangle-\langle w_{\tau^{J_1}},u\rangle|]\ge\frac{c_k\Delta(I)}{\sqrt{8|I|}}.$$

This bound holds for any fixed placement of $w$ and $u$.  However, with a uniformly random choice of permutations $\pi$ and $\sigma$, 
giving weightings $w_\pi$ and $w_\sigma$, we have (by definition from \eqref{gamma} and \eqref{Delta})
$$\E_{\pi,\sigma}\Delta_{w_\pi,u_\sigma}(I)=t\gamma(w,u).$$
Thus there is a constant $c=c(k)>0$ such that
$$\E_{\pi,\sigma,J_0,p,J_1}[|\langle w_\pi,u_\sigma\rangle-\langle w_{\tau^{J_1}\pi},u_\sigma\rangle|]
\ge \frac{c_kt\gamma(w,u)}{\sqrt{8t}}
\ge 2c\gamma(w,u)\sqrt{n}.$$
By the triangle inequality, we have
\begin{align*}
2c\gamma(w,u)\sqrt{n}
&\le \E_{\pi,\sigma,J_0,p,J_1}[|\langle w_\pi,u_\sigma\rangle|+|\langle w_{\pi\tau^J},u_\sigma\rangle|]\\
&=2\E_{\pi,\sigma}|\langle w_\pi,u_\sigma\rangle|\\
&=2\E_\pi|\langle w_\pi,u\rangle|,
\end{align*}
which implies our result.
\end{proof}

\section{Proof of Theorems \ref{thdisc} and \ref{thexp}}\label{proofs1}

We are now ready to prove our main quantitative results.  We begin by proving Theorem \ref{thexp};
Theorem \ref{thdisc} will then follow easily.  At the end of the section, we will deduce another result
on partitioning families of hypergraphs with large pairwise discrepancy.

\begin{proof}[Proof of Theorem \ref{thexp}]
As usual, we write $w=w_0+w_1$ and $u=u_0+u_1$, where $u_0$, $w_0$ are constant functions and 
$u_1$, $w_1$ sum to 0.  
Since $\langle (w_0)_\pi,u_0\rangle=\langle w_0,u_0\rangle=\langle w_0,u\rangle$
and $\langle(w_1)_\pi,u_0\rangle=\langle w_0,(u_1)_\pi\rangle=0$ for any $\pi$, it follows from
Lemma \ref{thme} and Proposition \ref{simpleprop} that, for some $c'=c'(k)>0$,
\begin{align*}
\E_\pi|\langle w_\pi,u\rangle|
&=\E_\pi|\langle w_0,u_0\rangle+\langle (w_1)_\pi,u_1\rangle|\\
&\ge\max\{|\langle w_0,u_0\rangle|,\frac12\E_\pi|\langle (w_1)_\pi,u_1\rangle|\}\\
&\ge2c'\max\{n^kU_0W_0,\gamma(w_1,u_1)\sqrt{n}\}\\
&\ge c'n^kU_0W_0+c'\gamma(w_1,u_1)\sqrt{n}.
\end{align*}
It is therefore enough to prove that, for some fixed $c=c(k)>0$,
\begin{equation}\label{gammabound}
\gamma(w_1,u_1)\ge cn^{k-1/2}\sum_{i=1}^k n^{-i/2}W_iU_i.
\end{equation}
Note that $\gamma(w_1,u_1)=\gamma(w,u)$, since $u_0$ and $w_0$ are invariant under permutations.

We know from \eqref{wwt} that, with $\tau=(xy)$,
\begin{align*}
\gamma(w,u)
&=\E_{\pi,\sigma}|\langle w_\pi,u_\sigma\rangle-\langle w_{\tau\pi},u_\sigma\rangle|\\
&=\E_{\pi,\sigma}|\langle (w_\pi)^{xy},(u_\sigma)^{xy}\rangle|.
\end{align*}

For $k=1$, $(w_\pi)^{xy}$ and $(u_\sigma)^{xy}$ are nullary functions with absolute values $|w_\pi(x)-w_\pi(y)|$ and $|u_\sigma(x)-u_\sigma(y)|$.  
It follows from \eqref{w1def} that
\begin{align*}
\gamma(w,u)
&=\E_{\pi,\sigma}|(w_\pi(x)-w_\pi(y))(u_\sigma(x)-u_\sigma(y))|\\
&=\E_{\pi}|w_\pi(x)-w_\pi(y)|\cdot\E_{\sigma}|u_\sigma(x)-u_\sigma(y)|\\
&=\E_{a,b}'|w(a)-w(b)|\cdot\E_{a,b}'|u(a)-u(b)|\\
&= W_1U_1,
\end{align*}
as required.

For $k\ge 2$, we prove Theorem \ref{thexp} by induction.  Consider
$$\E_{\pi,\sigma}|\langle (w_\pi)^{xy},(u_\sigma)^{xy}\rangle|.$$
For fixed $x,y$ we shall (as usual) write $\E_{\rho^*}$ for the expectation over permutations $\rho^*$ of $[n]\setminus{x,y}$; we will identify each
such permutation $\rho^*$ with the corresponding permutation $\rho$ of $[n]$ that fixes $x$ and $y$ and otherwise acts as $\rho^*$.
Note that if $\pi\in S(V)$ and $\rho^*\in S(V\setminus\{x,y\})$ are both uniformly distributed, then so is $\rho\pi$.  So
$$
\E_{\pi,\sigma}|\langle (w_\pi)^{xy},(u_\sigma)^{xy}\rangle|
=\E_{\pi,\rho^*,\sigma}|\langle (w_{\rho\pi})^{xy},(u_\sigma)^{xy}\rangle|
$$

We know by induction that
$$\E_{\rho^*}|\langle (w^{xy})_{\rho^*},u^{xy}\rangle|\ge c_{k-1}\sum_{i=0}^{k-1}W^{xy}_iU^{xy}_in^{k-1-i/2},$$
where $(W^{xy}_0,\ldots,W^{xy}_{k-1})$ 
and $(U^{xy}_0,\ldots,U^{xy}_{k-1})$ 
are the $W$-vector of $w^{xy}$ and $u^{xy}$ respectively.
Now note that if $\rho$ fixes $x$ and $y$ then $(w^{xy})_{\rho^*}=(w_\rho)^{xy}$.
It follows that
\begin{align}
\E_\pi|\langle w_\pi,u\rangle-\langle w_{\tau\pi},u\rangle|
&=\E_\rho\E_{\rho^*}|\langle w_{\rho\pi},u\rangle-\langle w_{\tau\rho\pi},u\rangle|\notag\\
&=\E_\pi\E_{\rho^*}|\langle (w_{\rho\pi})^{xy},u^{xy}\rangle|\notag\\
&=\E_\pi\E_{\rho^*}|\langle (w_{\pi})^{xy}_{\rho^*},u^{xy}\rangle|\notag\\
&\ge c_{k-1}\E_\pi \sum_{i=0}^{k-1}(W_\pi^{xy})_iU^{xy}_in^{k-1-i/2},\label{epi}
\end{align}
where we have used \eqref{wwt}, and the fact that $\pi$, $\sigma\pi$, $\sigma^*\pi$ are all uniformly distributed over $S(V)$.
But $w_\pi^{xy}$ is isomorphic to $w^{\pi^{-1}(x)\pi^{-1}(y)}$, and so $(W_\pi^{xy})_i=(W^{\pi^{-1}(x)\pi^{-1}(y)})_i$.  
It follows that
\begin{equation}\label{epi2}
\E_\pi(W_\pi^{xy})_i=\E_{v,w}'(W^{vw})_i=W_{i+1}
\end{equation}
and so, by \eqref{epi},
\begin{align*}
\E_\pi|\langle w_\pi,u\rangle-\langle w_{\tau\pi},u\rangle|
&\ge c_{k-1}\sum_{i=0}^{k-1}W_{i+1}U^{xy}_in^{k-1-i/2}\\
&=c_{k-1}\sum_{i=1}^kW_iU^{xy}_{i-1}n^{k-1/2-i/2}.
\end{align*}
But now, applying the same argument to $u_\sigma$ over random $\sigma$ gives
\begin{align*}
\E_{\pi,\sigma}|\langle w_\pi,u_\sigma\rangle - \langle w_{\tau\pi},u_\sigma\rangle|
&\ge\E_\sigma c_{k-1}\sum_{i=1}^kW_i(U_\sigma^{xy})_{i-1}n^{k-1/2-i/2}\\
&\ge c_{k-1}'\sum_{i=1}^{k}W_iU_in^{k-1/2-i/2},
\end{align*}
where we have used \eqref{epi2} with $u$ instead of $w$ in the final line.
This proves inequality \eqref{gammabound}, and therefore \eqref{expineq}.
\end{proof}

\begin{proof}[Proof of Theorem \ref{thdisc}]
This now follows easily from Lemma \ref{thmf} and \eqref{gammabound}, 
as we have 
\begin{align*}
\disc^+(w,u)\disc^-(w,u)
&\ge c^2\gamma(w,u)^2n^2\\
&\ge c'n^{2k+1}(\sum_{i=1}^kn^{-i/2}W_iU_i)^2\\
&\ge c'n^{2k+1}\sum_{i=1}^kn^{-i}W_i^2U_i^2,
\end{align*}
since all terms in the sum are nonnegative.
\end{proof}

Theorems \ref{thdisc} and \ref{thexp} also allow us to prove the following result.

\begin{theorem}\label{thma1}
For every $k\ge1$ there are constants $c_1,c_2>0$ such that the following holds.
For every $n\ge 2k$, and every family
$\mathcal F$ of $k$-uniform hypergraphs with vertex set $[n]$
such that $w([n])=0$ for all $w\in\mathcal F$, there is a partition
$\mathcal F=\mathcal F_1\cup\cdots\cup\mathcal F_k$ such that, for every $1\le i\le k$ and all
distinct pairs $w,u\in \mathcal F_i$, we have
$$\disc(w,u) \ge c_1 ||w||_1||u||_1/n^{i/2-1/2+k},$$
and, 
$$\E_\pi|\langle w_\pi,u\rangle|\ge c_2||w||_1||u||_1/n^{i/2+k}.$$
\end{theorem}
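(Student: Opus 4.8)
The plan is to build the partition by assigning each hypergraph to the coordinate of its $W$-vector that carries the most weight. The key point to exploit is that the hypothesis $w([n])=0$ forces the zeroth coordinate to vanish. Indeed, by the definition \eqref{wv1} we have $W_0=|w([n])|/\binom nk=0$ for every $w\in\mathcal F$. Combining this with the lower bound in Lemma \ref{thmb} gives $\sum_{i=1}^k W_i\ge cn^{-k}\|w\|_1$, so by the pigeonhole principle there is an index $i(w)\in\{1,\dots,k\}$ with $W_{i(w)}\ge (c/k)n^{-k}\|w\|_1$. I would define the partition by placing $w$ into $\mathcal F_{i(w)}$.

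With this partition, any two distinct $w,u\in\mathcal F_i$ share a large $i$-th coordinate: we have $W_i\ge (c/k)n^{-k}\|w\|_1$ and $U_i\ge (c/k)n^{-k}\|u\|_1$. The two displayed bounds then follow by keeping only the $j=i$ term in the relevant sums, which is legitimate since every term is nonnegative.

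For the discrepancy bound I would invoke Theorem \ref{thdisc}, discard all but the single term $j=i$ to obtain $\disc(w,u)\ge c'n^{k+1/2-i/2}W_iU_i$, and then substitute the lower bounds on $W_i$ and $U_i$. Collecting powers of $n$ gives
$$\disc(w,u)\ge c'(c/k)^2\,n^{k+1/2-i/2-2k}\|w\|_1\|u\|_1=c_1\,\|w\|_1\|u\|_1/n^{i/2-1/2+k},$$
as required. The expectation bound is obtained identically from Theorem \ref{thexp}: retaining the $j=i$ term in the sum (which here starts at $j=0$) yields $\E_\pi|\langle w_\pi,u\rangle|\ge cn^{k-i/2}W_iU_i$, and the same substitution gives $\E_\pi|\langle w_\pi,u\rangle|\ge c_2\,\|w\|_1\|u\|_1/n^{i/2+k}$.

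There is essentially no obstacle here, since the hard analytic work is already carried out in Theorems \ref{thdisc} and \ref{thexp} and in Lemma \ref{thmb}. The entire content of the argument is the observation that $w([n])=0$ annihilates the $W_0$ coordinate, so the $\ell_1$-mass certified by Lemma \ref{thmb} must be concentrated in coordinates $1$ through $k$, and a single pigeonhole step isolates a dominant coordinate that is shared within each part of the partition. The only thing that requires care is the bookkeeping of the exponents of $n$, which one checks matches the stated values $i/2-1/2+k$ and $i/2+k$.
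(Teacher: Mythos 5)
Your proposal is correct and follows essentially the same route as the paper: use $w([n])=0$ to kill $W_0$, apply Lemma \ref{thmb} and pigeonhole to find a dominant coordinate $i(w)\ge1$ defining the partition, and then extract the single $j=i$ term from Theorems \ref{thdisc} and \ref{thexp}. The exponent bookkeeping checks out, and your explicit $(c/k)$ constant from the pigeonhole step is if anything slightly more careful than the paper's phrasing.
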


\begin{proof}
Suppose $w\in \mathcal F$ has $W$-vector $(W_0,\ldots,W_k)$.  Since $w(V)=0$ we have $W_0=0$,
and so by Lemma \ref{thmb} there is $i\ge1$ with $W_i\ge||w||_1/n^k$: we choose such an $i$ and
place $w$ in $\mathcal F_i$.  Now for $w,u\in\mathcal F_i$, with $W$-vectors
$(W_0,\ldots,W_k)$ and $(U_0,\ldots,U_k)$, we have by Theorem \ref{thdisc}
$$\disc^+(w,u)\disc^-(w,u)\ge cn^{2k+1-i}W_i^2U_i^2\ge c'n^{1-i}||w||_1^2||u||_1^2/n^{2k}.$$
The first bound now follows, and the second follows similarly by applying Theorem \ref{thexp}.
\end{proof}

\section{Orthogonal sets of weightings}\label{proofs2}

Consider integers $n,k$ with $n\ge 2k\ge1$ and a set $V=\{v_1,\ldots,v_n\}$.  
Let us choose a sequence $\mathbf s=(x_1,y_1,\ldots,x_k,y_k)$ of elements of $V$ and
define the weightings $\phi_i$ on $V^{(k)}$ 
as in \eqref{canonical}.

We also define the subspace $V_i$ of $\R^{V^{(k)}}$ to be the linear span
$$V_i=\langle(\phi_i)_\pi:\pi\in S(V)\rangle.$$
Note that $V_i$ is independent of our choice of $\mathbf s$.

\begin{theorem}\label{orthogonal}
\begin{itemize}
 \item[(a)] For $i\ge1$, $\disc(w,\phi_i)=0$ if and only if $w\in V_i^\perp$. 
 \item[(b)] If $u\in V_i$ and $w\in V_j$, where $i\ne j$, then $\disc(u,w)=0$.
 \item[(c)] $\R^{V^{(k)}}$ is the direct sum $V_0\oplus\cdots\oplus V_k$.
 \item[(d)] Suppose that $u=u_0+\cdots+u_k$, with $u_i\in V_i$ for each $i$,
 and let $(U_0,\ldots,U_k)$ be the $W$-vector of $u$. 
 For $i=0,\ldots,k$, we have $U_i=0$ if and only if $u_i=\mathbf 0$.
 \item[(e)] If $w_1,\ldots,w_t$ are nonzero and satisfy $\disc(w_i,w_j)=0$ for all $i\ne j$ then there is a partition $[k]=I_1\cup\cdots\cup I_t$ such that we have
$$w_i\in V_0\oplus\bigoplus_{h\in I_i}V_h$$ 
for each $i$.  
\end{itemize}
\end{theorem}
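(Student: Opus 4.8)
The plan is to prove the five parts in the order (a), (b), (c), (d), (e), since each relies on the previous ones. For part (a), I would start from the observation that $\disc(w,\phi_i)=0$ means $\langle w_\pi,\phi_i\rangle$ is constant over all $\pi$, and by \eqref{d0} this constant equals $d(w)d(\phi_i)\binom nk$. Since $\phi_i$ sums to $0$ for $i\ge1$ (each term $(-1)^{|A\cap Y_i|}$ cancels in pairs), we have $d(\phi_i)=0$, so the constant is $0$; hence $\langle w_\pi,\phi_i\rangle=0$ for all $\pi$, equivalently $\langle w,(\phi_i)_{\pi^{-1}}\rangle=0$ for all $\pi$, which says exactly $w\perp V_i$. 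The converse is immediate. Part (b) then follows: if $u\in V_i$ and $w\in V_j$ with $i\neq j$, it suffices (by bilinearity and the $S(V)$-action) to check that $\langle(\phi_i)_\pi,(\phi_j)_\sigma\rangle$ behaves correctly; the cleanest route is to show $V_i\perp V_j$ as subspaces and that $d(u)=0$ or $d(w)=0$, so that the expected intersection is $0$ and every placement gives inner product $0$.

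For the orthogonal decomposition in part (c), I would establish two facts: first that $V_i\perp V_j$ for $i\neq j$, and second that the $V_i$ together span all of $\R^{V^{(k)}}$. Orthogonality of distinct $V_i$ is the computational heart and I expect it to be the main obstacle: one must compute $\langle(\phi_i)_\pi,\phi_j\rangle$ and show it vanishes when $i<j$. The idea is that $\phi_j$ is an alternating sum over the pair-structure $\{x_\ell,y_\ell\}_{\ell\le j}$, and integrating against any $(\phi_i)_\pi$ with fewer alternating coordinates forces a cancellation: summing $(-1)^{|A\cap Y_j|}$ over the choice within some pair $\{x_\ell,y_\ell\}$ not "seen" by $(\phi_i)_\pi$ gives zero. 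Once orthogonality holds, the spaces $V_0,\ldots,V_k$ are independent, so it remains to check the dimension count $\sum_i\dim V_i=\binom nk$; alternatively, and more robustly, I would argue that the $\phi_i$-construction captures the full filtration by showing that the only weighting orthogonal to every $V_i$ is $\mathbf 0$, using Lemma \ref{thmb} together with the interpretation of the $W$-vector components as projections. Indeed, Lemma \ref{niceform} gives $W_i=\E_\pi|\langle w_\pi,\phi_i^*\rangle|$, so $w\perp V_i$ for all $i$ forces every $W_i=0$, whence $\|w\|_1=0$ by \eqref{thmbeq}.

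Part (d) is where Lemma \ref{niceform} and the orthogonality from (c) combine most cleanly. Writing $u=\sum_j u_j$ with $u_j\in V_j$, I would use that $\langle u_\pi,\phi_i^*\rangle=\langle(u_i)_\pi,\phi_i^*\rangle$ by orthogonality (every $(u_j)_\pi$ with $j\neq i$ lies in $V_j\perp V_i$, and $\phi_i^*\in V_i$). Hence $U_i=\E_\pi|\langle(u_i)_\pi,\phi_i^*\rangle|$. If $u_i=\mathbf 0$ this is plainly $0$. For the converse, $U_i=0$ forces $\langle(u_i)_\pi,\phi_i^*\rangle=0$ for every $\pi$, i.e.\ $u_i\perp V_i$; but $u_i\in V_i$, so $u_i=\mathbf 0$. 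This gives the clean equivalence $U_i=0\iff u_i=\mathbf 0$.

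Finally, for part (e), I would decompose each $w_i=\sum_{h=0}^k w_{i,h}$ with $w_{i,h}\in V_h$, and define $I_i=\{h\ge1:w_{i,h}\neq\mathbf 0\}$, which by part (d) is exactly $\{h\ge1:(W_i)_h\neq 0\}$ where $(W_i)$ denotes the $W$-vector of $w_i$. The claim $w_i\in V_0\oplus\bigoplus_{h\in I_i}V_h$ is then immediate from the definition of $I_i$, and the content is that the $I_i$ are pairwise disjoint. To see this, suppose $h\ge1$ lay in both $I_i$ and $I_j$ for $i\neq j$; then $w_{i,h}$ and $w_{j,h}$ are both nonzero elements of $V_h$. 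Using $\disc(w_i,w_j)=0$ together with parts (a) and (b), I would show that the only surviving contribution to $\langle(w_i)_\pi,w_j\rangle$ over varying $\pi$ comes from the common component $V_h$, and that this contribution cannot be constant in $\pi$ unless one of $w_{i,h},w_{j,h}$ vanishes; this will follow from part (a) applied within the single space $V_h$, since $\disc((w_i)|_{V_h},\phi_h)\neq 0$ whenever the $V_h$-component is nonzero. Disjointness of the $I_i\subseteq[k]$ across the $t$ hypergraphs then yields a partition of a subset of $[k]$, which we extend arbitrarily to a partition of all of $[k]$ to match the stated form.
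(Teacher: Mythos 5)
Your parts (a) and (d) are sound, and your proof of (d) is in fact slightly cleaner than the paper's: the paper invokes Lemma \ref{thmb}, whereas you close the loop with $u_i\in V_i\cap V_i^\perp=\{\mathbf 0\}$. Your reduction of the spanning statement in (c) to Lemmas \ref{niceform} and \ref{thmb} is also exactly the paper's argument. However, there are two genuine gaps.

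First, the orthogonality $\langle(\phi_i)_\pi,(\phi_j)_\rho\rangle=0$ for $i\ne j$, which you correctly identify as the computational heart of (b) and (c). Your proposed cancellation sums over the free choice inside a pair $\{x_\ell,y_\ell\}$ of $\phi_j$ that is ``not seen'' by $(\phi_i)_\pi$. Such a pair need not exist: already for $i=1$, $j=2$, if the single pair of $(\phi_1)_\pi$ is $\{x_1,x_2\}$, then both pairs $\{x_1,y_1\}$ and $\{x_2,y_2\}$ of $\phi_2$ meet it, and no coordinate is free. (One pair of the smaller matching can meet two pairs of the larger one, so $j>i$ does not force an unseen pair unless $j>2i$.) The paper's argument is genuinely different at this point: it forms the multigraph on $V$ whose edges are the $i+j$ pairs of the two weightings; as a union of two matchings of different sizes this must contain a path $P$ with an odd number of edges, and one pairs each compatible $k$-set $e$ with $e\bigtriangleup V(P)$, which flips the sign of the product of the two weightings. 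You need this (or some equivalent) global pairing; the local one-pair swap does not cover all configurations.

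Second, in (e) the crux is that two \emph{nonzero} elements $v,v'$ of the \emph{same} $V_h$ (with $h\ge1$) cannot satisfy $\disc(v,v')=0$. You propose to get this from part (a) ``applied within $V_h$'', but (a) only says that a nonzero element of $V_h$ has nonzero discrepancy against the specific weighting $\phi_h$; it says nothing about its discrepancy against another arbitrary element of $V_h$. If $V_h$ were a reducible $S(V)$-module one could have nonzero $v,v'\in V_h$ with $\langle v_\pi,v'\rangle\equiv0$ for all $\pi$, so some further input is required. The paper supplies it via Theorem \ref{thdisc}: by part (d) the $h$-th components $W_h,U_h$ of the $W$-vectors are strictly positive when the $V_h$-components are nonzero, and Theorem \ref{thdisc} then forces $\disc(v,v')\ge c'n^{k+1/2-h/2}W_hU_h>0$. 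Your argument for (e) should route through Theorem \ref{thdisc} (or prove irreducibility of $V_h$ directly); as written, the appeal to (a) does not close the argument.
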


\begin{proof}
(a) Note first that $\phi_i(V)=0$, so $\phi$ has density $d(\phi_i)=0$.  But then
\begin{align*}
\disc\langle u,\phi_i\rangle=0
&\iff \forall\pi, \quad\langle u_\pi,\phi_i\rangle=0\\
&\iff \forall\pi, \quad\langle u,(\phi_i)_\pi\rangle=0\\
&\iff \forall \mbox{ sequences $(\lambda_\pi)$, } \langle u,\sum_\pi\lambda_\pi(\phi_i)_\pi\rangle=0\\
&\iff u\in V_i^\perp.
\end{align*}

(b) We may assume that $i,j\ne0$ or else the result is trivial.  We may therefore assume $d(w)=d(u)=0$.
It is then sufficient to show that, for any choice of permutations $\pi$ and $\rho$, we have $\langle (\phi_i)_\pi,(\phi_j)_\rho\rangle=0$.
So, let us choose $\pi$ and $\rho$, and set $\psi_i=(\phi_i)_\pi$ and $\psi_j=(\phi_j)_\rho$.

For $1\le s\le i$, we let $(a_s,b_s)=(\pi^{-1}(x_s),\pi^{-1}(y_s))$, 
and, for $1\le t\le j$, we let $(c_t,d_t)=(\rho^{-1}(x_t),\rho^{-1}(y_t))$.  Then 
we have 
\begin{equation}\label{psii}
\psi_i(e)=
\begin{cases}
0&\mbox{if $|e\cap\{a_s,b_s\}|\ne1$ for some $1\le s\le i$}\\
(-1)^{|A\cap \{b_1,\ldots,b_i\}|}&\mbox{otherwise,}
\end{cases}
\end{equation}
and
\begin{equation}\label{psij}
\psi_j(e)=
\begin{cases}
0&\mbox{if $|e\cap\{c_t,d_t\}|\ne1$ for some $1\le t\le j$}\\
(-1)^{|A\cap \{d_1,\ldots,d_j\}|}&\mbox{otherwise.}
\end{cases}
\end{equation}

Now consider the multigraph $G$ with vertex set $V$ and edge set given by  $a_1b_1,\ldots,a_ib_i,c_1d_1,\ldots,c_jd_j$.  
As $E(G)$ is the union of two matchings, it contains no odd cycles and so is the vertex-disjoint union of paths and even cycles 
(possibly including double edges).  Even cycles and paths with an even number of edges meet $\{a_1b_1,\ldots,a_ib_i\}$ 
and $\{c_1d_1,\ldots,c_jd_j\}$ in the same number of edges, so (as $i\ne j$) there must be a path $P=x_1\cdots x_{2t}$ with an odd number of edges.
Let $X=V(P)$.  If $e\in V^{(k)}$ is such that $\psi_i(e)$ and $\psi_j(e)$ are both nonzero, it follows from \eqref{psii} and \eqref{psij} 
that either $e\cap X=\{x_1,x_3,\ldots,x_{2t-1}\}$ or $e\cap X=\{x_2,x_4,\ldots,x_{2t}\}$, as each edge of $P$ must contain exactly one vertex of $e$. 
Furthermore, as $P$ has $2t-1$ edges, if we write $e'=e\bigtriangleup X$ then
$$\psi_i(e')\psi_j(e')=(-1)^{2t-1}\psi_i(e)\psi_j(e)=-\psi_i(e)\psi_j(e).$$
It follows that $\psi_i(e)\psi_j(e)+\psi_i(e')\psi_j(e')=0$.  But now, pairing off such edges, we see that $\sum_e\psi_i(e)\psi_j(e)=0$ and so
\begin{equation}\label{orthog}
\langle\psi_i,\psi_j\rangle=0
\end{equation}
as required.  Note also that \eqref{orthog} holds if $i=0$ or $j=0$.

(c) It follows from \eqref{orthog} and linearity that there is no linear dependence among sets of vectors chosen from distinct $V_i$ and thus that $V_0+\cdots+V_k$ is a direct sum.  Now suppose that $u\in(V_0\oplus \cdots \oplus V_k)^\perp$.  For $i=0,\ldots,k$, and any $\pi$, we have
$\langle u,(\phi_i)_\pi\rangle=0$.  It follows that $\disc(u,\phi_i)=0$ for every $i$,
and so, by Lemma \ref{niceform}, $u$ has W-vector $(0,\ldots,0)$. 
But by Lemma \ref{thmb}, this implies that $u=\mathbf 0$.  It follows that $V_0\oplus\cdots\oplus V_k=\R^{V^{(k)}}$.

(d) For $j\ne i$, we have $V_j\subseteq V_i^\perp$
and so $\langle (u_i)_\pi,\phi_j\rangle=0$ for every $\pi$.
Thus, for any $j$,
\begin{equation}\label{jj}
U_j=\E_\pi|\langle (\sum_i u_i)_\pi,\phi_j\rangle|=\E_\pi|\langle (u_j)_\pi,\phi_j\rangle|.
\end{equation}
Now let $(U^j_0,\ldots,U^J_k)$ be the $W$-vector of the weighted hypergraph $u_j$, so \eqref{jj} implies $U_j=U^j_j$. 
Clearly $U^j_i=0$ if $i\ne j$. By Lemma \ref{thmb} we have $U^j_j\ne0$
if and only if $u_j\ne\mathbf0$.  

(e)
Suppose we have nonzero $k$-uniform weighted hypergraphs $u,w$ such that $\disc(u,w)=0$.  By (c), we can write $u=u_0+\cdots+u_k$ and $w=w_0+\cdots+w_k$, where $u_i,w_i\in V_i$ for each $i$.  If $u_i\ne0$ then
$\E_\pi|\langle u_\pi,\phi_i\rangle|=\E_\pi|\sum_j\langle(u_j)_\pi,\phi_i\rangle|=\E_\pi|\langle(u_i)_\pi,\phi_i\rangle|>0$,
since $\langle(u_j)_\pi,\phi_i\rangle=0$ for $j\ne i$.
It follows that $U_i>0$ whenever $u_i$ is nonzero, and similarly $W_i>0$ whenever $w_i$ is nonzero. 
Since $\disc(u,w)=0$, it follows from Theorem \ref{thdisc} that $U_iW_i=0$ for every $i\ge1$ 
and so we deduce that $u_i$ and $w_i$ cannot both be nonzero.  The result follows.
\end{proof}

Note in particular, that part (e) proves Theorem \ref{orthogset}.  Indeed it has the following stronger corollary.

\begin{corollary}
Suppose that $u_1,\ldots,u_k$ are weighted $k$-uniform hypergraphs on vertex set $V$
such that $u_i(V)=0$ for all $i$, and
$\disc(u_i,u_j)=0$ for all $0\le i<j\le k$.  Then there is a relabelling such that
$w_i\in V_i$ for each $i$.
\end{corollary}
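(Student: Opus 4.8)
The plan is to run each $u_i$ through the orthogonal decomposition supplied by Theorem \ref{orthogonal}, show that the density constraint removes the $V_0$-component, and then argue that pairwise orthogonality forces the nonzero components of the $u_i$ to sit in disjoint index sets; a one-line counting argument will then pin each $u_i$ down to a single $V_h$. I assume throughout that the $u_i$ are nonzero (i.e.\ nontrivial), as in Theorem \ref{orthogset}. First I would invoke part (c) to write $u_i=\sum_{h=0}^k u_{i,h}$ with $u_{i,h}\in V_h$, and let $(U^i_0,\ldots,U^i_k)$ denote the $W$-vector of $u_i$. Since $u_i(V)=0$, the density of $u_i$ is $0$, so by \eqref{wv1} we have $U^i_0=|d(u_i)|=0$; part (d) then gives $u_{i,0}=\mathbf 0$, so in fact $u_i\in V_1\oplus\cdots\oplus V_k$ for every $i$.

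Next I would record, for each $i$, the support $I_i=\{h\in[k]:u_{i,h}\neq\mathbf 0\}$. Because $u_i$ is nonzero and its $V_0$-part vanishes, $I_i$ is nonempty. For distinct $i,j$, the reasoning from the proof of part (e) applies directly: since $\disc(u_i,u_j)=0$, the lower bound of Theorem \ref{thdisc} forces $\sum_{h=1}^k n^{-h/2}U^i_hU^j_h\le 0$, and as every term is nonnegative we get $U^i_hU^j_h=0$ for all $h\ge 1$. By part (d), $U^i_h>0$ is equivalent to $u_{i,h}\neq\mathbf 0$, so $u_{i,h}$ and $u_{j,h}$ are never simultaneously nonzero; that is, $I_i\cap I_j=\emptyset$.

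Finally, $I_1,\ldots,I_k$ are $k$ pairwise-disjoint nonempty subsets of the $k$-element set $[k]$, whence $\sum_i|I_i|=|\bigcup_i I_i|\le k$, and comparing with $\sum_i|I_i|\ge k$ forces $|I_i|=1$ for every $i$. Writing $I_i=\{\sigma(i)\}$ determines a bijection $\sigma$ of $[k]$ with $u_i\in V_{\sigma(i)}$, and relabelling $w_{\sigma(i)}:=u_i$ yields $w_j\in V_j$ for each $j$, as required. The one step needing any care is the nonemptiness of each $I_i$: this is precisely where both hypotheses (that the $u_i$ are nontrivial and that $u_i(V)=0$) are used, since without them a part could be empty and the counting collapse would fail. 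Everything else is bookkeeping over the direct sum of parts (c)--(d) together with the elementary pigeonhole step, so I do not anticipate any serious obstacle.
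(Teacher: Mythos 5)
Your proof is correct and follows exactly the route the paper intends: the corollary is stated as a consequence of Theorem \ref{orthogonal}(e), and your argument is precisely that deduction made explicit (decompose via (c), kill the $V_0$-parts using $u_i(V)=0$ and (d), get disjoint supports from Theorem \ref{thdisc} as in the proof of (e), then pigeonhole $k$ disjoint nonempty subsets of $[k]$ into singletons). You are also right to flag that the $u_i$ must be assumed nonzero --- this hypothesis is implicit in the paper (it appears in part (e)) and is genuinely needed for the counting step.
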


\section{Further questions}\label{conc}

In this paper, we have proved some results on the discrepancy of pairs of weighted $k$-uniform hypergraphs.  However, many interesting questions remain.  

\begin{itemize}
\item What can we say about discrepancy of directed graphs, or more generally of directed $k$-uniform hypergraphs
(in which edges are ordered $k$-tuples of distinct vertices)? 
More simply, what about oriented graphs, or tournaments?
What can be said about functions from $X\times Y$ to $\R$, where we are allowed to permute both $X$ and $Y$?
\item It is interesting to note what Theorem \ref{thdisc} says about random hypergraphs.  Let us fix $p\in (0,1)$ and let $w$ be a random $k$-uniform hypergraph with vertex set $[n]$, where each edge is present independently with probability $p$.  
For $i\le k$, consider a sequence $x_1,y_1,\dots,x_i,y_i$ of $2i$ distinct vertices.  
It follows from \eqref{canonical} that $\phi_i$ is nonzero on 
$\Theta(n^{k-i})$  $k$-sets, while $\phi_i([n])=0$.  Thus $\langle \phi_i,w\rangle$ is the difference of two binomial random variables with (the same) distribution with parameters $\Theta(n^{k-i})$ and $p$.  It follows that
$\E|\langle \phi_i,w\rangle|=\Theta(\sqrt{p(1-p)}n^{(k-i)/2})$ and so 
$\E W_i=\Theta(\sqrt{p(1-p)}n^{(k-i)/2}/n^{k-i})=\Theta(\sqrt{p(1-p)}n^{-(k-i)/2})$.
If $w$ and $u$ are random graphs with densities $p$, $q$ respectively, it then follows from
Theorem \ref{thdisc} that 
$\E \disc(w,u) \ge \sqrt{p(1-p)q(1-q)} n^{(k+1)/2}$, where the main contribution comes from the final 
components $W_k$, $U_k$ of the $W$-vectors of $w$, $u$.  
The problem of determining the behaviour of $\E\disc(w,u)$ was raised for graphs in \cite{BS11};
stronger results can be found in
 Bollob\'as and Scott \cite{BS} and
Ma, Naves and Sudakov \cite{JNS}.
\item What about the sharpness of everything?  For instance, when are the bounds in Theorem \ref{thma0} and Theorem \ref{thdisc} sharp to within a constant factor? 
\item The results in this paper are concerned with weighted $k$-uniform hypergraphs, 
but what happens if we restrict ourselves to the unweighted case?  
We can always generate a pair of $k$-uniform hypergraphs with discrepancy 0 by letting $G$ be the hypergraph with all edges containing
a fixed vertex, and letting $H$ be any regular $k$-uniform hypergraph, but what if we want $G$ and $H$ to have density bounded away from 0 and 1? 
For $k=2$,  the lower bound \eqref{prod2} (from \cite{BS11}) shows that the discrepancy must be large;
but for $k=3$, as noted in the introduction,
there is a pair of dense unweighted hypergraphs with discrepancy 0.  What happens for $k\ge4$?  
Could a version of Conjecture 10 from \cite{BS11} hold in this case?
In the opposite direction, it would be very interesting to characterize zero discrepancy pairs of unweighted hypergraphs.  In light of Theorem \ref{thdisc}, one line of attack would be to consider which components of the $W$-vector can be 0 for an unweighted hypergaph.  More generally, which subsets of components can support the $W$-vector of an unweighted $k$-uniform hypergraph? 
And is there a set of three unweighted hypergraphs that pairwise have discrepancy 0?
\item Can we say anything about the {\em distribution} of $\langle w_\pi,u\rangle$ for $k\ge3$?  It seems natural to hope for some form of Central Limit Theorem, as in the cases $k=1,2$.  Perhaps less ambitiously: we have a lower bound on $\E|\langle w_\pi,u\rangle|$, but what about an upper bound?  Maybe it is possible to determine this expectation up to a $\Theta(1)$ factor.
\item To what extent do the results above extend to the continuous setting, when we have measurable functions from $[0,1]^k$ to $\R$?  
\item We have worked with real weights in this paper.  What happens if we work with complex functions?
\item It would be interesting to consider different group actions.  As a starting point, what happens if we take the action of the cyclic group on itself, or of ${\mathbb Z}_2^n$ on itself?  In the case of the cyclic group, it would be natural to work with complex weights, as the Fourier basis gives a set of pairwise orthogonal weightings.  
\end{itemize}


\begin{thebibliography}{99}

\bibitem{A69} O.~Abe, A central limit theorem for the number of edges in the random intersection of two graphs, {\em Ann. Math. Statist} {\bf 40} (1969), 144-151

\bibitem{BE86} A.~D.~Barbour and G.~K. Eagleson, Random association of symmetric
arrays, {\em Stochastic Analysis and Applications} {\bf 4} (1986), 239-281

\bibitem{BE86b} A.~D.~Barbour and G.~K. Eagleson Tests for space time clustering, in {\em Stochastic
spatial processes}, Ed. P. Tautu, Lecture Notes in Mathematics {\bf 1212}, 42--51, 1986, Springer,
Berlin

\bibitem{BC05} A.~D.~Barbour and  L.~H.~Y.~Chen,
The permutation distribution of matrix correlation statistics,
in {\em Stein's method and applications},
Eds: A.~D.~Barbour and L.~H.~Y.~Chen,
IMS Lecture Note Series Volume 5, pp. 223-246,
World Scientific Press, Singapore (2005)

\bibitem{BD66} D.~E.~Barton and F.~N.~David, The random intersection of two graphs, in {\em Research papers in statistics, Festschrift for J.~Neyman}, F.~N.~David, ed., assisted by Evelyn Fix, John Wiley and Sons, London-New York-Sydney, 1966, viii+468pp. 

\bibitem{BC87}
J.~Beck and W.~Chen,
{\em Irregularities of distribution},
Cambridge Tracts in Mathematics {\bf 89}, 
Cambridge University Press, 1987, xiv+294 pp.

\bibitem{BS95} J.~Beck and V.~T.~S\'os, Discrepancy theory,  in {\em Handbook of Combinatorics}, Vol. 2, 1405--1446, Elsevier, Amsterdam, 1995

\bibitem{BB01} B.~Bollob\'as, {\em Random Graphs}, Second Edition, Cambridge Studies in Advanced Mathematics, Cambridge University Press, 2001, xviii+498pp.

\bibitem{BS06} B.~Bollob\'as and A.~D.~Scott, Discrepancy in graphs and hypergraphs, 
in {\em More sets, graphs and numbers},  Ervin Gyori, Gyula~O.H.~Katona and Laszlo Lov\'asz, eds, 
pp.~33--56,
Bolyai Soc. Math. Stud. {\bf 15}, Springer, Berlin, 2006. 

\bibitem{BS11} B.~Bollob\'as and A.~D.~Scott, Intersections of graphs, {\em Journal of Graph Theory} {\bf 66} (2011), 261--282 

\bibitem{BS} B.~Bollob\'as and A.~D.~Scott,
 Intersections of random hypergraphs and tournaments, {\em European Journal of Combinatorics, to appear}

\bibitem{C00} B. Chazelle, {\em The discrepancy method}, Cambridge University Press, Cambridge, 2000, xviii+463pp.

\bibitem{D44} H.~E.~Daniels, The relation between measures of correlation in the universe of
sample permutations, {\em Biometrika} {\bf 33} (1944), 129--135

\bibitem{D72}
J.~Doyen,
Constructions of disjoint Steiner Triple Systems,
{\em Proceedings of the American Mathematical Society}
{\bf 32} (1972), 409--416

\bibitem{DT97}
M.~Drmota and R.~Tichy, 
{\em Sequences, discrepancies and applications},
Lecture Notes in Mathematics {\bf 1651}, Springer-Verlag, Berlin, 1997, xiv+503 pp.

\bibitem{D53} M.~Dwass, 
On the asymptotic normality of certain rank order statistics.
{\em Ann. Math. Statistics} {\bf 24} (1953), 303--306

\bibitem{EFRS94} P.~Erd\H os, R.~Faudree, C.~Rousseau and R.~Schelp, A local density condition for triangles, {\em Discrete Math.} {\bf 127} (1994), 153--161

\bibitem{EGPS88} P.~Erd\H os, M.~Goldberg, J.~Pach and J.~Spencer, Cutting a graph into two dissimilar halves, {\em J. Graph Theory} {\bf 12} (1988), 121--131

\bibitem{ES71} P.~Erd\H os and J.~Spencer, Imbalances in $k$-colorations, {\em Networks} {\bf 1}
  (1971/2), 379--385
  
\bibitem{G54} R.~C. Geary, The contiguity ratio and statistical mapping, {\em The Incorporated
Statistician} {\bf 5} (1954), 115--145

\bibitem{H51} W.~Hoeffding, 
A combinatorial central limit theorem,
{\em Ann. Math. Statistics} {\bf 22} (1951), 558--566

\bibitem{H87} L.~J. Hubert, {\em Assignment methods in combinatorial data analysis}, Marcel
Dekker, New York, 1987

\bibitem{JNS}
J. Ma, H. Naves and B. Sudakov, Discrepancy of random graphs and hypergraphs, {\em Random Structures and Algorithms, to appear}

\bibitem{KS03} P.~Keevash and B.~Sudakov, Local density in graphs with forbidden subgraphs,  {\em Combinatorics, 
Probability and Computing} {\bf 12} (2003), 139--153

\bibitem{K64} G.~Knox, Epidemiology of childhood leukaemia in Northumberland and Durham,
{\em Brit. J. Prev. Soc. Med.} {\bf 18} (1964), 17--24

\bibitem{K95} M.~Krivelevich, On the edge distribution in triangle-free graphs, {\em J. Combinatorial Theory, Ser. B} {\bf 63} (1995), 245--260

\bibitem{KN74}
L.~Kuipers and H.~Niederreiter, 
{\em Uniform distribution of sequences},
Wiley-Interscience, 1974. xiv+390 pp. 

\bibitem{M67} N.~Mantel, The detection of disease clustering and a generalized regression
approach, {\em Cancer Res.} {\bf 27} (1967), 209--220

\bibitem{M99} J.~Matou\v sek, {\em Geometric discrepancy}, Algorithms and Combinatorics {\bf 18}, Springer-Verlag, Berlin, 1999, xii+288 pp.

\bibitem{M48} P.~A.~P.~Moran, The interpretation of statistical maps, {\em J. Roy. Statist. Soc. B}
{\bf 10} (1948), 243--251

\bibitem{MR04} D.~Mubayi and V.~R\"odl,
Uniform edge distribution in hypergraphs is hereditary, {\em Electronic Journal of Combinatorics} {\bf 11} (2004), Paper R55

\bibitem{N49}
G.~E.~Noether, 
On a theorem of Wald and Wolfowitz,
{\em Ann. Math. Statistics} {\bf 20} (1949), 455--458

\bibitem{VS83} V.~T.~S\'os, Irregularities of partitions: Ramsey theory, uniform distribution, in {\em Surveys in Combinatorics} (Southampton, 1983), 201--246, London Math. Soc. Lecture Note Ser., 82, Cambridge Univ. Press, Cambridge-New York, 1983

\bibitem{T73}
L.~Teirlinck,
On the maximum number of disjoint Steiner triple systems,
{\em Discrete Mathematics} {\bf 6} (1973), 299--300.

\bibitem{WW44} A.~Wald and J.~Wolfowitz, Statistical tests based on permutation of the observations,
{\em Ann. Math. Statist.} {\bf 15}  (1944), 358--372

\bibitem{W16}
H.~Weyl, \"Uber die Gleichverteilung von Zahlen mod. Eins, Mathematische Annalen {\bf 77} (1916), 313--352

\end{thebibliography}
\end{document}